\newcommand{\shadetheboxesPM}[1]{
    \foreach \x/\y in {#1}
    \fill[pattern color = black!75, pattern=north east lines] (\x,\y) rectangle +(1,1);
}
\newcommand{\drawverticallines}[3]{
    \foreach \x in {#2}
    \draw[line width=#3] (\x+0.01,0.01) -- (\x+0.01,#1+0.99);
}
\newcommand{\drawhorizontallines}[3]{
    \foreach \y in {#2}
    \draw[line width=#3] (0.01,\y+0.01) -- (#1+0.99,\y+0.01);
}
\newcommand{\drawclpattern}[2]{
	\foreach[count=\x] \y in {#1}
	{
		\filldraw (\x,\y) circle (#2 pt);
	}
}
\newcommand{\drawspecialbox}[1]{
    \foreach \x/\y/\z/\w/\A in {#1}
    {
        \fill[color = white!100, opacity=1, rounded corners = 1.5pt] (\x+0.125,\y+0.125) rectangle (\z-0.125,\w-0.125);
        \draw[color = black, rounded corners = 1.5pt] (\x+0.125,\y+0.125) rectangle (\z-0.125,\w-0.125);
        \fill[black] (\x/2+\z/2,\y/2+\w/2) node {\A};
    }
}
\newcommand{\drawspecialboxlarge}[1]{
    \foreach \x/\y/\z/\w/\A in {#1}
    {
        \fill[color = white!100, opacity=1, rounded corners = 1.5pt] (\x+0.125,\y+0.125) rectangle (\z-0.125,\w-0.125);
        \draw[color = black, rounded corners = 1.5pt] (\x+0.125,\y+0.125) rectangle (\z-0.125,\w-0.125);
        \fill[black] (\x/2+\z/2,\y/2+\w/2) node {\Large \A};
    }
}
\newcommand{\drawsolidshadedbox}[1]{
    \foreach \x/\y/\z/\w/\A in {#1}
    {
        \fill[color = gray!50, opacity=1, rounded corners=1.5pt] (\x+0.125,\y+0.125) rectangle (\z-0.125,\w-0.125);
        \draw[color = black, rounded corners=1.5pt] (\x+0.125,\y+0.125) rectangle (\z-0.125,\w-0.125);
        \fill[black] (\x/2+\z/2,\y/2+\w/2) node {\A};
    }
}
\newcommand{\drawlabels}[1]{
	\foreach \x/\y/\lab in {#1}
	{
		\draw (\x + 0.5,\y + 0.5) node {\lab};
	}
}
\newcommandx{\patt}[9][4={},5={},6={},7={},8={},9=4]
{
	\scalebox{#1}
	{
		\begin{tikzpicture}[baseline=(current bounding box.center)]
			\useasboundingbox (0.0,-.3) rectangle (#2+1,#2+1.3);
			\shadetheboxesPM{#4}
			\draw (0.01,0.01) grid (#2+1-0.01,#2+1-0.01);

			\drawsolidshadedbox{#6}
			\drawspecialbox{#7}
			\drawspecialboxlarge{#5}
			\drawclpattern{#3}{#9}
			\drawlabels{#8}
		\end{tikzpicture}
	}
}
\newcommandx{\cpatt}[8][4={},5={},6={},7={},8={}]
{
	\scalebox{#1}
	{
		\begin{tikzpicture}[baseline=(current bounding box.center)]
			\useasboundingbox (0.0,-.3) rectangle (#2+1,#2+1.3);
			\shadetheboxesPM{#4}
			\draw (0.01,0.01) grid (#2+1-0.01,#2+1-0.01);

			\drawsolidshadedbox{#6}
			\drawspecialbox{#7}
			\drawspecialboxlarge{#5}
			\drawclpattern{#3}{4}

			\foreach \x/\y in {#8}
			{
				\draw[line width=1] (\x,\y) circle (7 pt);
			}
		\end{tikzpicture}
	}
}
\newcommandx{\metapatt}[8][6={},7={},8={}]
{
    \scalebox{#1}
    {
        \begin{tikzpicture}[baseline=(current bounding box.center)]
					\foreach \width/\height in {#2}
					{
						\useasboundingbox (0.0,-.3) rectangle (\width+1,\height+1.3);
            \shadetheboxesPM{#6}

            \foreach \pos/\type in {#4}
            {
                \ifthenelse{\equal{\type}{v}}
                {
                    \drawverticallines{\height}{\pos}{1.7pt}
                }
                {
								    \ifthenelse{\equal{\type}{d}}
                    {
                      \draw[densely dashed] (\pos,0) -- (\pos,\height+1);
                    }
										{
											\drawhorizontallines{\width}{\pos}{1.7pt}
										}
                }
            }

            \foreach \pos/\type in {#3}
            {
                \ifthenelse{\equal{\type}{v}}
                {
                    \drawverticallines{\height}{\pos}{0.6pt}
                }
                {
										\drawhorizontallines{\width}{\pos}{0.6pt}
                }
            }

            \drawsolidshadedbox{#8}
            \drawspecialbox{#7}

            \foreach \x/\y/\type in {#5}
            {
                \ifthenelse{\equal{\type}{a}}
                {
                    \draw (\x,\y) circle (6pt);
                    \filldraw (\x,\y) circle (3pt);
                }
                {
                    \filldraw (\x,\y) circle (4pt);
                }
            }
					}
        \end{tikzpicture}
    }
}
\newcommandx{\dpatt}[9][6={},7={},8={},9={}]
{
    \scalebox{#1}
    {
        \begin{tikzpicture}[baseline=(current bounding box.center)]
					\foreach \width/\height in {#2}
					{
						\useasboundingbox (0.0,-.3) rectangle (\width+1,\height+1.3);
            \shadetheboxesPM{#6}

            \foreach \pos/\type in {#4}
            {
                \ifthenelse{\equal{\type}{v}}
                {
                    \drawverticallines{\height}{\pos}{1.7pt}
                }
                {
								    \ifthenelse{\equal{\type}{d}}
                    {
                      \draw[densely dashed] (\pos,0) -- (\pos,\height+1);
                    }
										{
											\drawhorizontallines{\width}{\pos}{1.7pt}
										}
                }
            }

            \foreach \pos/\type in {#3}
            {
                \ifthenelse{\equal{\type}{v}}
                {
                    \drawverticallines{\height}{\pos}{0.6pt}
                }
                {
										\drawhorizontallines{\width}{\pos}{0.6pt}
                }
            }

            \drawsolidshadedbox{#8}
            \drawspecialbox{#7}

            \foreach \x/\y/\type in {#5}
            {
                \ifthenelse{\equal{\type}{a}}
                {
                    \draw9 (\x,\y) circle (6pt);
                    \filldraw (\x,\y) circle (3pt);
                }
                {
                    \filldraw (\x,\y) circle (4pt);
                }
            }

						\drawlabels{#9}
					}
        \end{tikzpicture}
    }
}
\pgfmathsetmacro{\patttablescale}{1.05}
\pgfmathsetmacro{\pattdispscale}{0.80}
\pgfmathsetmacro{\patttextscale}{0.5}
\colorlet{lightgray}{black!15}
\theoremstyle{plain}
\newtheorem{theorem}{Theorem}[section]
\newtheorem{proposition}[theorem]{Proposition}
\newtheorem{corollary}[theorem]{Corollary}
\newtheorem{lemma}[theorem]{Lemma}
\newtheorem{conjecture}[theorem]{Conjecture}
\theoremstyle{definition}
\newtheorem{definition}[theorem]{Definition}
\newtheorem{remark}[theorem]{Remark}
\DeclareMathOperator{\BG}{bg}
\newcommand{\bg}[1]{\BG(#1)}
\newcommand{\bd}[1]{\partial(#1)}
\DeclareMathOperator{\DPERMS}{dperms}
\newcommand{\dperms}[1]{\DPERMS(#1)}
\DeclareMathOperator{\UPERMS}{uperms}
\newcommand{\uperms}[1]{\UPERMS(#1)}
\DeclareMathOperator{\UDPERMS}{udperms}
\newcommand{\udperms}[1]{\UDPERMS(#1)}
\DeclareMathOperator{\Av}{Av}
\newcommand{\av}[1]{\Av\!\left(#1\right)}
\DeclareMathOperator{\LRM}{lrm}
\newcommand{\lrm}[1]{\LRM(#1)}
\DeclareMathOperator{\STD}{st}
\newcommand{\std}[1]{\STD(#1)}
\DeclareMathOperator{\ENC}{enc}
\newcommand{\enc}[1]{\ENC(#1)}
\DeclareMathOperator{\BENC}{benc}
\newcommand{\benc}[1]{\BENC(#1)}
\newcommand*{\pw}{3 pt}
\newcommand{\drawclpatternPOINTS}[2]{
    \foreach[count=\x] \y in {#2}
    {
        \filldraw (\x,\y) circle (#1);
    }
}
\newcommand*{\picscale}{0.65}
\newcommand*{\picscales}{0.5}
\tikzstyle{mynode}=[circle, draw, thin,fill=gray!20, scale=0.8]
\newcommand*{\lw}{1 pt}
\newcommand*{\labelshift}{0.35}
\newcommand{\thepoints}[2][(0,0)]{
    \begin{scope}[shift={#1}]
        \foreach \i in {1,...,#2}{
            \filldraw (\i-1,1-\i) circle (\pw);
        }
    \end{scope}
}
\newcommand{\labelthepointsbelow}[2][(0,0)]{
    \begin{scope}[shift={#1}]
        \foreach[count=\i] \l in {#2}{
            \draw (\i-1-\labelshift,1-\i-\labelshift) node {$\l$};
        }
    \end{scope}
}
\newcommand{\labelthepointsabove}[2][(0,0)]{
    \begin{scope}[shift={#1}]
        \foreach[count=\i] \l in {#2}{
            \draw (\i-1+\labelshift,1-\i+\labelshift) node {$\l$};
        }
    \end{scope}
}
\newcommand{\thegrid}[2][1]{
    \pgfmathsetmacro\max{#1+#2}
    \foreach \i in {1,...,#2}{
        \draw[line width=\lw] (\i-1, \max-1) -- (\i-1, #2-\i) -- (\max-1, #2-\i);
    }
    \foreach \i in {1,...,#1}{
        \draw[line width=\lw] (0, \max-\i) -- (#2+\i-1, \max-\i) -- (#2+\i-1, 0);
    }

}
\newcommand{\someboxes}[1]{
	\foreach \i/\j in {#1}{
        \draw[line width=\lw] (\i, \j) -- (\i+1, \j) -- (\i+1, \j+1) -- (\i, \j+1) -- (\i,\j);
    }
}
\newcommand{\somepoints}[1]{
	\foreach \i/\j in {#1}{
        \filldraw (\i,\j) circle (\pw);
    }
}
\newcommand{\thegridext}[2][2]{
    \pgfmathsetmacro\max{#1+#2}
    \pgfmathsetmacro\maxes{#1-2}
    \foreach \i in {1,...,#2}{
        \draw[line width=\lw] (\i-1, \max-2) -- (\i-1, #2-\i) -- (\max-1, #2-\i);
    }
    \ifnum#1>2
    \foreach \i in {1,...,\maxes}{
        \draw[line width=\lw] (0, \max-\i-1) -- (#2+\i-1, \max-\i-1) -- (#2+\i-1, 0);
    }
    \fi
        \draw[line width=\lw] (0, #2) -- (\max-1, #2) -- (\max-1, 0);

    \draw[line width=\lw] (\max-2, #2) -- (\max-2, 0);
}
\newcommand{\thegridextf}[2][2]{
    \pgfmathsetmacro\max{#1+#2}
    \pgfmathsetmacro\maxes{#1-2}
    \foreach \i in {1,...,#2}{
        \draw[line width=\lw] (\i-1, \max-1) -- (\i-1, #2-\i) -- (\max-2, #2-\i);
    }
    \ifnum#1>2
    \foreach \i in {1,...,\maxes}{
        \draw[line width=\lw] (0, \max-\i-1) -- (#2+\i-1, \max-\i-1) -- (#2+\i-1, 0);
    }
    \fi
        \draw[line width=\lw] (0, #2) -- (\max-2, #2) -- (\max-2, 0);

    \draw[line width=\lw] (#2, \max-2) -- (#2, \max-1) -- (0, \max-1);
}
\newcommand{\permaslabel}[3]{
    \scalebox{#1}
    {
        \begin{tikzpicture}[baseline=(current bounding box.center)]
            \drawclpatternPOINTS{#2}{#3}
        \end{tikzpicture}
    }
}
\newcommand{\labeltheboxes}[2][(0,0)]{
    \begin{scope}[shift={#1}]
        \foreach \i/\j/\l/\xs/\ys in {#2}{
            \draw (\j-0.5+\xs,-\i+0.5+\ys) node {$\l$};
        }
    \end{scope}
}
\newcommand{\shadetheboxes}[2][(0,0)]{
\begin{scope}[shift={#1}]
    \foreach \i/\j in {#2}{
        \fill[pattern color = black!75, pattern=north east lines] (\j-1,-\i) rectangle (\j,-\i+1);
    }
\end{scope}
}
\newcommand*{\radius}{2 cm}
\newcommand{\thenodesA}[2][(0,0)]{
    \begin{scope}[shift={#1}]
    \tikzstyle{mynode}=[circle, draw, thin,fill=gray!20, scale=0.8]
    \foreach \i in {1,...,#2}{
        \foreach \j in {\i,...,#2}{
            \filldraw (\j-0.5,-\i-0.5) circle (\pw);
            \node[mynode] (\i \j) at (\j-0.5,-\i-0.5) {\tiny$\i\j$};
        }
    }
    \ifthenelse{#2>2}{
    \pgfmathtruncatemacro\rowstop{#2-2}
    \pgfmathtruncatemacro\columnstop{#2-1}
    \foreach \i in {1,...,\rowstop}{
        \pgfmathtruncatemacro\ii{\i+1}
        \foreach \j in {\ii,...,\columnstop}{
            \pgfmathtruncatemacro\jj{\j+1}
            \foreach \k in {\ii,...,\j}{
                \foreach \m in {\jj,...,#2}{
                    \path[thick, bend right] (\i \j) edge (\k \m);
                }
            }
        }
    }
    }{}
    \end{scope}
}
\newcommand{\thenodesAsmall}[2][(0,0)]{
    \begin{scope}[shift={#1}]
    \tikzstyle{mynode}=[circle, draw, thin,fill=gray!20, scale=0.4]
    \foreach \i in {1,...,#2}{
        \foreach \j in {\i,...,#2}{
            \filldraw (\j-0.5,-\i-0.5) circle (\pw);
            \node[mynode] (\i \j) at (\j-0.5,-\i-0.5) {\tiny$\i\j$};
        }
    }
    \ifthenelse{#2>2}{
    \pgfmathtruncatemacro\rowstop{#2-2}
    \pgfmathtruncatemacro\columnstop{#2-1}
    \foreach \i in {1,...,\rowstop}{
        \pgfmathtruncatemacro\ii{\i+1}
        \foreach \j in {\ii,...,\columnstop}{
            \pgfmathtruncatemacro\jj{\j+1}
            \foreach \k in {\ii,...,\j}{
                \foreach \m in {\jj,...,#2}{
                    \path[thick, bend right] (\i \j) edge (\k \m);
                }
            }
        }
    }
    }{}
    \end{scope}
}
\newcommand{\thesubnodesA}[2]{
    \tikzstyle{mynode}=[circle, draw, thin,fill=gray!20, scale=0.8]
    \foreach \i in {1,...,#1}{
        \foreach \j in {#1,...,#2}{
            \filldraw (\j-0.5,-\i-0.5) circle (\pw);
            \node[mynode] (\i \j) at (\j-0.5,-\i-0.5) {\tiny$\i\j$};
        }
    }
    \ifthenelse{#2>2}{
    \pgfmathtruncatemacro\rowstop{#1-1}
    \pgfmathtruncatemacro\columnstop{#2-1}
    \foreach \i in {1,...,\rowstop}{
        \pgfmathtruncatemacro\ii{\i+1}
        \foreach \j in {#1,...,\columnstop}{
            \pgfmathtruncatemacro\jj{\j+1}
            \foreach \k in {\ii,...,#1}{
                \foreach \m in {\jj,...,#2}{
                    \path[thick, bend right] (\i \j) edge (\k \m);
                }
            }
        }
    }
    }{}
}
\newcommand{\thecoreA}[2][(0,0)]{
    \begin{scope}[shift={#1}]
        \thepoints[(0,#2-1)]{#2}
        \thegrid{#2}
        \thenodesA[(0,#2+1)]{#2}
    \end{scope}
}
\newcommand{\thenodesB}[2][(0,0)]{
    \begin{scope}[shift={#1}]
    \tikzstyle{mynode}=[circle, draw, thin,fill=gray!20, scale=0.8]
    \foreach \i in {1,...,#2}{
        \foreach \j in {\i,...,#2}{
            \filldraw (\j-0.5,-\i-0.5) circle (\pw);
            \node[mynode] (\i \j) at (\j-0.5,-\i-0.5) {\tiny$\i\j$};
        }
    }
    \ifthenelse{#2>2}{
    \pgfmathtruncatemacro\rowstop{#2-2}
    \foreach \i in {1,...,\rowstop}{
        \pgfmathtruncatemacro\ii{\i+1}
        \pgfmathtruncatemacro\iii{\i+2}
        \foreach \j in {\iii,...,#2}{
            \pgfmathtruncatemacro\jj{\j+1}
            \pgfmathtruncatemacro\ju{\j-1}
            \foreach \k in {\ii,...,\ju}{
                \foreach \m in {\k,...,\ju}{
                    \path[thick, bend left] (\i \j) edge (\k \m);
                }
            }
        }
    }
    }{}
\end{scope}
}
\newcommand{\thenodesBsmall}[2][(0,0)]{
    \begin{scope}[shift={#1}]
    \tikzstyle{mynode}=[circle, draw, thin,fill=gray!20, scale=0.4]
    \foreach \i in {1,...,#2}{
        \foreach \j in {\i,...,#2}{
            \filldraw (\j-0.5,-\i-0.5) circle (\pw);
            \node[mynode] (\i \j) at (\j-0.5,-\i-0.5) {\tiny$\i\j$};
        }
    }
    \ifthenelse{#2>2}{
    \pgfmathtruncatemacro\rowstop{#2-2}
    \foreach \i in {1,...,\rowstop}{
        \pgfmathtruncatemacro\ii{\i+1}
        \pgfmathtruncatemacro\iii{\i+2}
        \foreach \j in {\iii,...,#2}{
            \pgfmathtruncatemacro\jj{\j+1}
            \pgfmathtruncatemacro\ju{\j-1}
            \foreach \k in {\ii,...,\ju}{
                \foreach \m in {\k,...,\ju}{
                    \path[thick, bend left] (\i \j) edge (\k \m);
                }
            }
        }
    }
    }{}
\end{scope}
}
\newcommand{\thesubnodesB}[2]{
    \tikzstyle{mynode}=[circle, draw, thin,fill=gray!20, scale=0.8]
    \foreach \i in {1,...,#1}{
        \foreach \j in {#1,...,#2}{
            \filldraw (\j-0.5,-\i-0.5) circle (\pw);
            \node[mynode] (\i \j) at (\j-0.5,-\i-0.5) {\tiny$\i\j$};
        }
    }
    \ifthenelse{#2>2}{
    \pgfmathtruncatemacro\rowstop{#1-1}
    \pgfmathtruncatemacro\columnbegin{#1+1}
    \foreach \i in {1,...,\rowstop}{
        \pgfmathtruncatemacro\ii{\i+1}
        \foreach \j in {\columnbegin,...,#2}{
            \pgfmathtruncatemacro\columnstop{\j-1}
            \foreach \k in {\ii,...,#1}{
                \foreach \m in {#1,...,\columnstop}{
                    \path[thick, bend left] (\i \j) edge (\k \m);
                }
            }
        }
    }
    }{}
}
\newcommand{\thecoreB}[2][(0,0)]{
    \begin{scope}[shift={#1}]
        \thepoints[(0,#2-1)]{#2}
        \thegrid{#2}
        \thenodesB[(0,#2+1)]{#2}
    \end{scope}
}
\newcommand{\completegraph}[2][(0,0)]{
    \begin{scope}[shift={#1}]
    \foreach \s in {1,...,#2}{
        \node[draw, circle] (\s) at ({-360/#2 * (\s - 1)}:\radius) {\tiny$\s$}; 
    }
    \foreach \s in {1,...,#2}{
        \foreach \t in {\s,...,#2}{
            \pgfmathsetmacro\tt{\t-1}
            {\ifnum\s=\t
            {}
            \else
            \draw (\t) -- (\s); 
            \fi
            }
        }
    }
    \end{scope}
}
\newcommand{\setpartitions}[2][(0,0)]{
    \begin{scope}[shift={#1}]
        \foreach \s in {1,...,#2}{
            \node (\s) at (\s-1,0) {\tiny$\s$};
        }
        \foreach \s in {1,...,#2}{
            \foreach \t in {\s,...,#2}{
            {\ifnum\s=\t
            {}
            \else
            \draw (\s) to [out=90, in=90] (\t);
            \fi
            }
            }
        }
    \end{scope}
}
\title[Pattern avoiding permutations and independent sets]
{Pattern avoiding permutations and independent sets in graphs}
\author[Christian Bean]{Christian Bean$^{\star}$}
\address{School of Computer Science, Reykjavik University, Reykjavik, Iceland}
\email{christianbean@ru.is}
\author[Murray Tannock]{Murray Tannock$^{\star}$}
\address{School of Computer Science, Reykjavik University, Reykjavik, Iceland}
\email{murray14@ru.is}
\author[Henning Ulfarsson]{Henning Ulfarsson$^{\star}$}
\address{School of Computer Science, Reykjavik University, Reykjavik, Iceland}
\email{henningu@ru.is}
\thanks{$^{\star}$ Research partially supported by grant 141761-051 from the
Icelandic Research Fund.}
\subjclass[2010]{Primary: 05A05; Secondary: 05A15}
\begin{document}

\begin{abstract}
	We introduce a new method for encoding permutations
	as weighted independent sets in a family of graphs we call cores. The
	encoding allows us to enumerate $(1324, 2143)$-, $(1234, 1324, 2143)$-,
	$(1234, 1324, 1432, 3214)$-avoiding permutations with respect to
	their number of ``boundary points'' and the size of the independent set
	in the graph they correspond to.
\noindent \\

\emph{Keywords:} permutation patterns, non-crossing subgraphs, independent sets
\end{abstract}

\maketitle
\thispagestyle{empty}

\section{Introduction}

The \emph{standardization} of a string $s$ of distinct integers is the permutation $\std{s}$
obtained by replacing the $i$th smallest entry of $s$ with
$i$. A permutation $\pi$ of length $n$ \emph{contains} permutation $p$ of
length $k$ if there is a subsequence (not necessarily consisting of consecutive
entries) $\pi_{i_1} \pi_{i_2} \dots \pi_{i_k}$ whose standardization is $p$. In
this context, $p$ is called a (\emph{classical permutation}) \emph{pattern}.
The subsequence in $\pi$ is called an
\emph{occurrence} of $p$. If no occurrence exists then $\pi$ \emph{avoids} $p$.
Take for example $\pi = 51324$, which contains $p = 123$ as the subsequences
$134$ and $124$, but avoids $p = 231$.

Given a pattern $p$ we define $\Av_n(p)$ as the set of permutations of length
$n$ that avoid $p$, and $\av{p} = \cup_{n \geq 0} \Av_n(p)$. For a set $P$ of
patterns we let $\Av_n(P) = \cap_{p \in P} \Av_n(p)$, and
$\av{P} = \cup_{n \geq 0} \Av_n(P)$. A \emph{permutation class} is any set
of permutations defined by the avoidance of a set of classical patterns.

We illustrate a connection between independent sets in graphs to certain pattern avoiding
permutations. The classical problem of enumerating
non-crossing subgraphs in a complete graph drawn on a regular polygon will turn out to be
equivalent to a instance of $132$-avoiding permutations. The
methods developed can be extended to enumerate subclasses of permutations
avoiding the pattern $1324$, the only permutation class avoiding a single length
$4$ pattern that remains unenumerated.

\section{Encoding permutations as weights on a grid}
\label{sec:encoding_permutations_as_weights_on_a_grid}

A letter $\pi_i$ in a permutation $\pi$ is called a \emph{left-to-right
minimum} if $\pi_j > \pi_i$ for all $j = 1, \dots, i-1$. The letter $\pi_1$ is
always a left-to-right minimum in a non-empty permutation.
\emph{Left-to-right maxima}, \emph{right-to-left-minima} and
\emph{right-to-left-maxima} are defined analogously.

The sequence of left-to-right minima of a permutation $\pi$ will be called the
\emph{lrm-boundary} of the permutation and denoted $\lrm{\pi}$, e.g.,~if $\pi =
845367912$ then $\lrm{\pi} = 8431$. Given any permutation with $n$ left-to-right-minima, we can arrange the
lrm-boundary on a northwest-southeast diagonal and insert the remaining points of the
permutation in a \emph{staircase} (\emph{grid}), $B_n$, above this diagonal.
This grid is a collection of boxes with integer coordinates, labeled in matrix
notation, so the the top row has cells $(1 1), (1 2), \dots, (1 n)$, from left
to right, the next row has cells $(2 2), (2 3), \dots, (2 n)$, and so on.
Note that the lrm-boundary is a part of the grid.
See Figure~\eqref{fig:lrmgrid-132} for an example with the permutation $845367912$.
\begin{figure}[ht]
	\centering
	\begin{subfigure}[b]{0.3\textwidth}
		\begin{center}
		\begin{tikzpicture}[scale=0.3]
    		\drawclpatternPOINTS{\pw}{8,4,5,3,6,7,9,1,2}
		\end{tikzpicture}
		\end{center}
		\caption{The plot of the points $(i, \pi_i)$ of the permutation on the
		\mbox{Cartesian} plane.}
		\label{fig:permutation}
	\end{subfigure}
	\quad
	\begin{subfigure}[b]{0.3\textwidth}
		\begin{center}
		\begin{tikzpicture}[scale=\picscales]
			\thepoints[(0,3)]{4}
        	\labelthepointsbelow[(0,3)]{8,4,3,1}
        	\labeltheboxes[(0,4)]{1/3/\permaslabel{0.15}{0.25}{1}/0.15/0,
                       2/2/\permaslabel{0.15}{0.25}{1}/0/-0.25,
                       2/3/\permaslabel{0.15}{0.25}{1,2}/-0.15/0,
                       4/4/\permaslabel{0.15}{0.25}{1}/0/0}
        	\thegrid{4}
		\end{tikzpicture}
		\end{center}
		\caption{The permutation drawn on a staircase, $B_4$. The
		lrm-boundary is drawn on a diagonal.}
		\label{fig:lrmgrid-132}
	\end{subfigure}
	\quad
	\begin{subfigure}[b]{0.3\textwidth}
		\begin{center}
		\begin{tikzpicture}[scale=\picscales]
			\labelthepointsbelow[(0,0)]{\phantom{1}}
	        \thepoints[(0,3)]{4}
    	    \labeltheboxes[(0,4)]{1/3/1/0/0,
                       2/2/1/0/0,
                       2/3/2/0/0,
                       4/4/1/0/0}
        	\thegrid{4}
		\end{tikzpicture}
		\end{center}
		\caption{The permutation encoded as the number of points in
		each box in the staircase.}
		\label{fig:encoding-132}
	\end{subfigure}
	\caption{Three representations of the permutation $\pi = 845367912$.}
\end{figure}

We can give a coarser representation of this permutation by recording the number
of points in each box of the grid, see Figure~\eqref{fig:encoding-132}.
This is the (\emph{staircase}) \emph{encoding} of the
permutation, denoted $\enc{\pi}$. It is not necessary to label the left-to-right minima,
their values can be inferred from the number of points in each box. There are
other permutations (of length $8$) that have the encoding in
Figure~\eqref{fig:encoding-132}, e.g.,~$846379512$.


For permutations avoiding $132$, the presence of points in a box may constrain
other boxes to be empty. For example, in the encoding in
Figure~\eqref{fig:encoding-132}, the third box in row $1$ is occupied, which
implies the right-most boxes in rows $2$ and $3$ must be empty. These
constraints are symmetric. To capture these constraints we create a graph by
placing a vertex for every box and an edge between boxes that exclude one
another. More formally:

\begin{definition}\label{def:132core}
  Let $n \geq 0$ be an integer. The \emph{$132$-core of size $n$} is the
  labeled, undirected graph $D_n$ with vertex set $\{ (ij) \colon i = 1, \dots,
  n,\, j = i, \ldots, n \}$ and an edge between $(ij)$ and $(k\ell)$ if $i < k$,
  $j < \ell$ and the rectangle $\{i, i+1, \ldots, k\} \times \{j, j+1, \ldots,
  \ell\}$ is a subset of the vertex set.
\end{definition}

If $n = 0$, the $132$-core is the empty graph. See Figure~\ref{fig:cellincore} (left) for the
set of neighbours of a cell in a $132$-core, and Figure~\ref{fig:132-cores} for
the $132$-cores of sizes $n = 1, \dots, 5$. We use the letter ``$D$'' in
anticipation of Definition~\ref{def:cores}. Until we define another type of core
in Section~\ref{sec:permutations_avoiding_123} we will refer to $132$-cores as
\emph{cores}.

\begin{figure}[ht]
	\centering
	\begin{tikzpicture}[scale=\picscales]

			\thepoints[(0,7)]{8}
        	\thegrid{8}
        	\node[mynode] (3 5) at (5-0.5,6-0.5) {\tiny$3 5$};
        	\shadetheboxes[(0,8)]{1/3,1/4,
			                      2/3,2/4,
								  4/6,4/7,4/8,
								  5/6,5/7,5/8}

			\begin{scope}[shift={(11,0)}]
			\thepoints[(0,7)]{8}
        	\thegrid{8}
        	\node[mynode] (3 5) at (5-0.5,6-0.5) {\tiny$3 5$};
        	\shadetheboxes[(0,8)]{1/6,1/7,1/8,
			                      2/6,2/7,2/8,
								  4/4}
			\end{scope}

	\end{tikzpicture}
	\caption{The cell $(35)$ with its set of neighbours shaded; on the
	left in the $132$-core (Definition~\ref{def:132core}),
	and on the right in the $123$-core (Definition~\ref{def:123core}).}
	\label{fig:cellincore}
\end{figure}

\begin{figure}[hb]
\begin{center}
\begin{tikzpicture}[scale=\picscale]

    \thecoreA{1}

    \thecoreA[(2.5,-1)]{2}

    \thecoreA[(6,-2)]{3}

    \thecoreA[(10.5,-3)]{4}

    \thecoreA[(16,-4)]{5}

\end{tikzpicture}

\caption{The $132$-cores of sizes $1, \dots, 5$.}
\label{fig:132-cores}
\end{center}
\end{figure}

\begin{lemma}\label{lem:weighted_independent_sets}
  The staircase encoding, restricted to the set of permutation avoiding $132$ with
  exactly $n$ left-to-right minima, is a bijection to the set of
  weighted independent set of $D_n$.
\end{lemma}

\begin{proof}
    A rectangular region in a $132$-avoiding permutation drawn on a staircase
    grid (as in Figure~\eqref{fig:lrmgrid-132}) is increasing.
    Therefore, from the definition of the $132$-cores, the
    staircase encoding, restricted to the set of $132$-avoiding permutations with
    $n$ left-to-right minima, is a bijection to
    the set of weighted independent sets of $D_n$.
\end{proof}

For a concrete example, consider the permutation $845367912$ from
Figure~\eqref{fig:lrmgrid-132}. We can encode it as the independent set $\{(13),
(22), (23), (44)\}$ in $D_4$ along with the sequence of positive integer weights
$(1,1,2,1)$ recording the number of points at each vertex in the independent
set.

From the definition, it is easy to see that $D_n$ has $1 + 2 +
\dots + n = \binom{n+1}{2}$ vertices. To compute the number of edges consider a
vertex $(1j)$ in the first row of $D_n$. It has neighbours in the rectangular
region $\{2, \dots, j\} \times \{j+1, \dots, n\}$, giving a total of
$(j-1)(n-j) = (n+1)j - j^2 - n$ edges. The first row therefore contributes
  \[
    (n+1)\binom{n+1}{2} - \frac{2n+1}{3}\binom{n+1}{2} - n^2
    = \frac{n+2}{3}\binom{n+1}{2} - n^2
    = \binom{n}{3}
  \]
edges. If we only consider edges going to the southeast then row $i$ in the core
of size $n$ will look precisely like the top row in a smaller core of size
$n-i+1$. The total number of edges in $D_n$ is therefore $\binom{n+1}{4}$. The
enumerations of vertices and edges in the cores are special cases of the
following result.

\begin{proposition} \label{prop:cliquecount132}
    The number of cliques of size $k$ in $D_n$ is $\binom{n+1}{2k}$.
\end{proposition}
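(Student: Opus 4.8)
The plan is to set up an explicit bijection between the vertices of $D_n$ and the edges of the complete graph on $n+1$ points placed on a circle, under which adjacency in $D_n$ becomes geometric crossing of chords; a clique then becomes a pairwise-crossing family of chords, and such families are counted simply by their endpoint sets.

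First I would encode each vertex $(ij)$ as the chord $\{i, j+1\}$ joining points $i$ and $j+1$ of the ordered set $\{1, \dots, n+1\}$. Since $i \le j$ forces $i < j+1$, this is a well-defined pair, and as $(ij)$ ranges over the $\binom{n+1}{2}$ vertices it hits each two-element subset of $\{1, \dots, n+1\}$ exactly once, so the map is a bijection. The next step is to check that this encoding turns the defining adjacency of $D_n$ into the crossing relation: the first bullet of the definition reads $i < k \le j < \ell$, which says precisely that the four endpoints interleave as $i < k < j+1 < \ell+1$, that is, that $\{i,j+1\}$ and $\{k,\ell+1\}$ cross; the second bullet gives the symmetric crossing. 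In particular the diagonal vertices $(ii) \mapsto \{i,i+1\}$ become boundary edges, which cross nothing, matching the fact that they are isolated in $D_n$. Thus a clique of size $k$ in $D_n$ is exactly a set of $k$ chords on $\{1, \dots, n+1\}$ that pairwise cross.

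It then remains to count pairwise-crossing families of $k$ chords, and I claim each such family is freely and uniquely determined by its set of $2k$ endpoints. Given any $2k$ points, written in order as $p_1 < \dots < p_{2k}$, the family pairing $p_m$ with $p_{m+k}$ for $1 \le m \le k$ is pairwise crossing, since for $m < m'$ we have $p_m < p_{m'} < p_{m+k} < p_{m'+k}$ (here $m' \le k < m+k$ forces $p_{m'} < p_{m+k}$). For uniqueness I would argue that the chord through $p_1$ must split the remaining $2k-2$ points into two arcs, and for every other chord to cross it each must have one endpoint in each arc; since the chords perfectly match the two arcs, the arcs must have equal size, forcing $p_1$ to be matched with $p_{k+1}$. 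The remaining chords then all run between $\{p_2, \dots, p_k\}$ and $\{p_{k+2}, \dots, p_{2k}\}$, and mutual crossing forces that matching to be order-preserving, yielding exactly the antipodal pairing $p_m \mapsto p_{m+k}$. Hence choosing a valid family is the same as choosing its $2k$ endpoints from $\{1, \dots, n+1\}$, of which there are $\binom{n+1}{2k}$.

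The routine parts are the two verifications in the second paragraph; the main obstacle is the uniqueness half of the final step, namely that $2k$ points admit only one pairwise-crossing perfect matching. I expect the cleanest route is the balance argument above, in which equal arc sizes pin down the antipodal chord directly, rather than an induction. As a consistency check, $k=1$ returns $\binom{n+1}{2}$ vertices and $k=2$ returns $\binom{n+1}{4}$ edges, recovering the two counts already established in the text.
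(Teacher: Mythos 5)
Your proof is correct, but it takes a genuinely different route from the paper's. The paper proves this proposition by induction on $n$: a $k$-clique either avoids the first row of $D_n$, contributing $\binom{n}{2k}$ by the induction hypothesis, or contains exactly one first-row vertex $(1j)$, in which case the remaining vertices form a $(k-1)$-clique in the rectangle $[2,\dots,j]\times[j+1,\dots,n]$, counted by choosing $k-1$ rows and $k-1$ columns; the resulting expression $\binom{n}{2k}+\sum_{j}\binom{j-1}{k-1}\binom{n-j}{k-1}$ is then collapsed by a Vandermonde convolution to $\binom{n}{2k}+\binom{n}{2k-1}$ and Pascal's rule. You instead transport the problem to the polygon: your dictionary $(ij)\mapsto\{i,j+1\}$ with adjacency equivalent to crossing is exactly the paper's Theorem~\ref{thm:bijection}, which the paper states and proves later (by induction on $n$) and uses only for independent sets, never for cliques; on top of it you invoke the classical fact that any $2k$ points on a circle admit a unique pairwise-crossing perfect matching (your balance argument for uniqueness is sound), so cliques biject with $2k$-element subsets of $\{1,\dots,n+1\}$. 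Your route buys a fully bijective proof that explains why the answer is a single binomial coefficient, and your direct, case-by-case verification of the crossing correspondence is arguably simpler than the paper's inductive proof of Theorem~\ref{thm:bijection}; the paper's route, on the other hand, is self-contained at that point in the text and needs no geometric setup. One step you leave implicit, though it is immediate from your definition of crossing as interleaving, is that pairwise-crossing chords can share no endpoint, so a crossing family of $k$ chords really does have $2k$ distinct endpoints; this is what licenses counting families by their endpoint sets.
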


\begin{proof}
    The case of $k = 1$ (vertices) and $k = 2$ are given above. A clique of size
    $k > 2$ in $D_n$ either has:
    \begin{itemize}
        \item no vertex in the first row of the core, or
        \item has exactly one vertex in the first row.
    \end{itemize}
    In the first case, we have $\binom{n}{2k}$ cliques of size $k$, by
    induction. In the second case, assume the vertex in the first row is
    $(1j)$. The remaining vertices in the clique must form a clique of size
    $k-1$ in the rectangular region $\{2, \dots, j\} \times \{j+1, \dots, n\}$.
    Such a clique is obtained by choosing rows $r_1 < r_2 < \dots < r_{k-1}$, and
    independently columns $c_1 < c_2 < \dots < c_{k-1}$. Such a choice corresponds
    to the clique $(r_1, c_1), (r_2, c_2), \dots, (r_{k-1}, c_{k-1})$. The number of
    these choices is $\binom{j-1}{k-1}\binom{n-j}{k-1}$. Thus the number of
    cliques is
    \[
        \binom{n}{2k} + \sum_{j = k}^{n-k+1} \binom{j-1}{k-1}\binom{n-j}{k-1}.
    \]
    The sum is easily seen to be $\binom{n}{2k-1}$, which completes the proof.
\end{proof}

Because of Lemma~\ref{lem:weighted_independent_sets} the number of independent
sets in the core is more relevant for our purposes. In the following theorem
we use a multivariate generating function to enumerate the number of independent
sets of a given size. The variable $x$ tracks the number of boundary
points (which are left-to-right minima until we consider other boundaries) and $y$
tracks the size of the independent set. When we modify the generating function to
enumerate permutations we let $z$ track the number of points in the permutation and
$t$ track the number of boundary points.

\begin{theorem} \label{thm:indep_count_132}
  The number of independent sets of size $k$ in the $132$-core of size $n$ is
  given by the coefficient of $x^ny^k$ in the generating function $F = F(x,y)$
  satisfying the functional equation
  \[
    F = 1 + xF +
    \frac{x y F^2}{1 - y (F - 1)}.
  \]
  Solving this equation gives,
   \[
    F = \frac{(1-x)(1+y) + y - \sqrt{x(1+y)(xy+x-4y-2)+1}}{2y}.
  \]
\end{theorem}

\begin{proof}
If $n = 0$ the core is empty and the empty set is the only independent set.
This gives the term $x^0 y^0 = 1$ in Equation~\eqref{eqn:weighted_independent_sets}
below. If $n > 0$ then we
consider two cases: First, the case when the independent set has no vertex in
the top row of the core. This type of independent set corresponds to an independent
set in the core of size $n-1$ which is isomorphic to the subgraph induced by
the vertices in rows $2, \dots, n$. This gives the term $xF$ in the equation
below. Finally, we consider the case when the independent set has vertices in
the top row. There are no edges between vertices in the top row, therefore
any subset of vertices $\{1x_1,1x_2,\dots,1x_d\}$ (with $x_i < x_j$ if $i < j$)
can be in the independent set. The neighbours of $(1 x_1)$ form a
rectangular region $\{2,\dots,x_1\} \times \{x_1+1, \dots, n\}$ in rows
$2, \dots, x_1$. The remaining vertices in these rows form a core of size
$x_1 - 1$. This can be visualized on the staircase grid, as in
Figure~\ref{fig:132staircase_grid_with_shading}. Similarly, for $1 < i \leq d$,
the neighbours of $(1 x_i)$ form a rectangular region
$\{2, \dots, x_i\} \times \{x_i+1, \dots, n\}$ in rows $2, \dots, x_i$. The
remaining vertices in rows $x_{i-1}+1, \dots, x_i$ form a core of size
$x_i - x_{i-1}$. Lastly, the cells in rows $x_{d+1}, \dots, n$ do not share
an edge with a vertex in the top row, and form a core of
size $n-d$. This case gives the rest of the terms in the equation below.
\begin{equation}\label{eqn:weighted_independent_sets}
\begin{aligned}
    F &= 1 + xF + xyF^2 + \dots + xy^nF^2(F-1)^{n-1} +
                          \dotsm \\ &= 1 + xF + \frac{xyF^2}{1-y(F-1)}.
\end{aligned}\qedhere
\end{equation}
\end{proof}

\begin{figure}[h]
\begin{center}
\begin{tikzpicture}[scale=\picscales]
   \thepoints[(0,8)]{9}
   \labeltheboxes[(0,9)]{1/3/\permaslabel{0.2}{0.25}{1}/0/0,
                  1/6/\permaslabel{0.2}{0.25}{1}/0/0,
                  1/8/\permaslabel{0.2}{0.25}{1}/0/0}
  \shadetheboxes[(0,9)]{1/1,1/2,1/4,1/5,1/7,1/9,
                 2/4,2/5,2/6,2/7,2/8,2/9,
                 3/4,3/5,3/6,3/7,3/8,3/9,
                 4/7,4/8,4/9,
                 5/7,5/8,5/9,
                 6/7,6/8,6/9,
                 7/9,
                 8/9}
   \thegrid{9}

   \draw (10, 5) node {$\leftrightarrow$};
    \begin{scope}[shift={(11,6.15)}]
        \thegrid{2}
        \thepoints[(0,1)]{2}
    \end{scope}
    \begin{scope}[shift={(11,2.75)}]
        \thegrid{3}
        \thepoints[(0,2)]{3}
    \end{scope}
    \begin{scope}[shift={(10.5,1)}]
        \thegrid{2}
        \thepoints[(0,1)]{2}
    \end{scope}
    \begin{scope}[shift={(11,-0.3)}]
        \thegrid{1}
        \thepoints[(0,0)]{1}
    \end{scope}
\end{tikzpicture}
\caption{The staircase grid $D_9$, where vertices $(13)$, $(16)$, $(18)$ are
in the independent set. The
vertices which cannot be added to make an independent set are shaded. The induced
subgraphs are shown on the right.}
\label{fig:132staircase_grid_with_shading}
\end{center}
\end{figure}

To obtain a formula for the coefficients of the generating function $F(x,y)$ we
can make the substitution $F = 1 + w$ in the functional equation in
Theorem~\ref{thm:indep_count_132}. Rearranging produces the equation.
\[
	w = x (1+y)\left( \frac{1+w}{1-yw} \right).
\]
This same equation appears in Section 2.2 in Flajolet and Noy~\cite{flanoy} where
they enumerate non-crossing
subgraphs of a complete graph drawn on a polygon. Like there we can apply
Lagrange inversion (see e.g., Comtet~\cite{comtet}) to obtain the coefficients:

\begin{corollary}[Theorem 2 (ii) in Flajolet and Noy~\cite{flanoy}]
  \label{cor:indep_count_132}
  The number of independent sets of size $k$ in the $132$-core of size $n$ is
  given by
  \[
      I(n,k) = \frac{1}{n} \sum_{j=0}^{n-1}
      \binom{n}{k-j} \binom{n}{j+1} \binom{n-1+j}{n-1}.
  \]
\end{corollary}

Setting $y = z/(1-z)$, to allow an arbitrarily long (non-empty) increasing sequence
instead of a vertex in an independent set, and $x = tz$, to track the number of
left-to-right minima, in the generating function in
Theorem~\ref{thm:indep_count_132} gives the
generating function
\begin{equation}\label{eqn:Narayana}
    \frac{z-tz+1-\sqrt{(t-1)^2 z^2 - 2(t+1) +1}}{2z}.
\end{equation}
This is the well-known generating function for the Narayana numbers~\cite[A001263]{oeis}
enumerating the
permutations avoiding $132$ by their number of left-to-right minima. These
numbers are usually arranged in a triangle, whose $n$-th row (starting from $0$)
contains the number of $132$-avoiding permutations of length $n$ with $k$
left-to-right minima (the coefficient of $z^n t^k$). The triangle is often called
a \emph{Catalan-triangle} since the row sums are Catalan numbers, see
Table~\ref{tbl:Narayanatriangle}.

\begin{table}[!hbp]
{\small
\begin{tabular}{lllllllllll}
1 & & & & & & & & & & \\
1 & & & & & & & & & & \\
1 & 1 & & & & & & & & & \\
1 & 3 & 1 & & & & & & & & \\
1 & 6 & 6 & 1 & & & & & & & \\
1 & 10 & 20 & 10 & 1 & & & & & & \\
1 & 15 & 50 & 50 & 15 & 1 & & & & & \\
1 & 21 & 105 & 175 & 105 & 21 & 1 & & & & \\
1 & 28 & 196 & 490 & 490 & 196 & 28 & 1 & & & \\
1 & 36 & 336 & 1176 & 1764 & 1176 & 336 & 36 & 1 & & \\
1 & 45 & 540 & 2520 & 5292 & 5292 & 2520 & 540 & 45 & 1 & \\
1 & 55 & 825 & 4950 & 13860 & 19404 & 13860 & 4950 & 825 & 55 & 1
\end{tabular}
}
\caption{The Catalan triangle of Narayana numbers~\cite[A001263]{oeis}.}
\label{tbl:Narayanatriangle}
\end{table}

Setting $t = 1$ in \eqref{eqn:Narayana}, and thus forgetting the information
about the left-to-right minima, gives the usual generating function of the
Catalan numbers.

Instead of enumerating $\av{132}$ by the number of left-to-right minima we can
enumerate them by the size of the independent set in the corresponding core.
\begin{proposition}
    \label{prop:triangle}
    The number of permutations avoiding $132$ of length $\ell$ produced by inflating an
    independent set of size $k$ is
    \begin{equation*}
        \sum_{n=0}^\ell{}I(n,k)\binom{\ell-n-1}{k-1}
    \end{equation*}
    where $I(n,k)$ is defined in Corollary~\ref{cor:indep_count_132}.
\end{proposition}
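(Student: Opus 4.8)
The plan is to set up a bijection between $132$-avoiding permutations of length $\ell$ and weighted independent sets in the cores $D_n$, refined by the number $n$ of left-to-right minima, and then reduce the count to an elementary composition count. By Remark~\ref{rem:unique} the staircase encoding of a $132$-avoiding permutation is unique, and by Remark~\ref{rem:weighted_independent} this encoding is a weighted independent set in $D_n$, where $n$ is the number of lrms of $\pi$; conversely, every assignment of positive integer weights to the vertices of an independent set in $D_n$ is realised by a (necessarily unique) such permutation. First I would make this correspondence explicit, so that counting permutations becomes counting pairs $(S,w)$, where $S$ is an independent set in some $D_n$ and $w$ assigns a positive integer weight to each vertex of $S$.

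Next I would track how the length $\ell$ decomposes under this bijection. The $n$ left-to-right minima lie on the diagonal and contribute exactly $n$ points, while the remaining entries are precisely those recorded by the weights: the total weight $\sum_{v \in S} w(v)$ equals the number of non-lrm entries. Hence $\ell = n + \sum_{v \in S} w(v)$, so a permutation of length $\ell$ whose independent set $S$ has size $k$ corresponds to a weighting of the $k$ vertices of $S$ by positive integers summing to $\ell - n$.

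The final step is the enumeration. For fixed $n$ there are $I(n,k)$ choices for an independent set $S$ of size $k$ in $D_n$, by Theorem~\ref{thm:indep_count_132}, and for each such $S$ the number of ways to assign positive integer weights $w_1, \dots, w_k$ with $w_1 + \cdots + w_k = \ell - n$ is the number of compositions of $\ell - n$ into exactly $k$ positive parts, namely $\binom{\ell - n - 1}{k - 1}$. Summing the product $I(n,k)\binom{\ell - n - 1}{k - 1}$ over all $n$ from $0$ to $\ell$ yields the claimed formula; the binomial coefficient vanishes automatically when $\ell - n < k$ and $I(n,k)$ vanishes when $n < k$, so the stated summation range introduces no spurious terms.

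Once the bijection is in place this is a routine product-and-sum count, so the only real obstacle is the surjectivity half of the correspondence: verifying that every weighted independent set in $D_n$ genuinely arises as the encoding of a $132$-avoiding permutation, rather than merely that encodings \emph{are} weighted independent sets. I would dispatch this using the increasing-row and increasing-rectangle observation underlying Remark~\ref{rem:unique}, which shows that independence of the chosen boxes is exactly the condition needed to place the prescribed numbers of points consistently while avoiding $132$.
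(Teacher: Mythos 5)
Your proposal is correct and follows exactly the same route as the paper's proof: summing over the number $n$ of left-to-right minima, counting independent sets of size $k$ by $I(n,k)$, and counting the weight assignments as compositions of the remaining $\ell - n$ points into $k$ positive parts via $\binom{\ell-n-1}{k-1}$. The paper compresses all of this into a single sentence (and in fact writes ``partitions of $n-\ell$'' where it means compositions of $\ell-n$), whereas you spell out the bijection and its surjectivity; the substance is identical.
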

\begin{proof}
The sum is taken over the number of left-to-right minima and the binomial
coefficient counts the number of compositions of the remaining $\ell - n$ points
into $k$ non-empty parts, corresponding to the vertices of the independent set.
\end{proof}

The proposition gives a new Catalan triangle, shown in
Table~\ref{tbl:Newtriangle}, which has been added to the Online Encyclopedia of
Integer Sequences~\cite[A262370]{oeis}.
\begin{table}[!htbp]
{\small
\begin{tabular}{llllllllll}
1 & & & & & & & & & \\
1 & & & & & & & & & \\
1 & 1 & & & & & & & & \\
1 & 4 & & & & & & & & \\
1 & 10 & 3 & & & & & & & \\
1 & 20 & 20 & 1 & & & & & & \\
1 & 35 & 77 & 19 & & & & & & \\
1 & 56 & 224 & 139 & 9 & & & & & \\
1 & 84 & 546 & 656 & 141 & 2 & & & & \\
1 & 120 & 1176 & 2375 & 1104 & 86 & & & & \\
1 & 165 & 2310 & 7172 & 5937 & 1181 & 30 & & & \\
1 & 220 & 4224 & 18953 & 24959 & 9594 & 830 & 5 & & \\
1 & 286 & 7293 & 45188 & 87893 & 56358 & 10613 & 380 & & \\
1 & 364 & 12012 & 99242 & 270452 & 264012 & 88472 & 8240 & 105 & \\
1 & 455 & 19019 & 203775 & 747877 & 1044085 & 554395 & 100339 & 4480 & 14
\end{tabular}
}
\caption{The Catalan triangle given by Proposition~\ref{prop:triangle}.}
\label{tbl:Newtriangle}
\end{table}

In the triangle, the right-most numbers in rows $2, 5, 8, 11, 14$ form the
sequence
\[
    1, 1, 2, 5, 14,
\]
and the right-most numbers in rows $1, 4, 7, 10, 13$ form the sequence
\[
    1, 3, 9, 30, 105.
\]
This leads to the following conjecture, where the first 8 terms of the sequences
have been checked.

\begin{conjecture}
    \begin{enumerate}
        \item The right-most numbers in rows numbered $2 + 3i$, $i = 0,1\dots$
        are the Catalan numbers.
        \item The right-most numbers in rows numbered $1 + 3i$, $i = 0,1\dots$
        are the central elements of the $(1,2)$-Pascal
        triangle~\cite[A029651]{oeis}.
    \end{enumerate}
\end{conjecture}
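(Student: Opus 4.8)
The plan is to read both families of entries directly off the formula of Proposition~\ref{prop:triangle},
\[
T(\ell,k) = \sum_{n=0}^{\ell} I(n,k)\binom{\ell-n-1}{k-1},
\]
and to exploit the fact that at the rightmost nonzero position of a row this sum collapses to a single term. First I would determine exactly when a summand survives. The binomial factor is nonzero only when $n \le \ell - k$, while $I(n,k)\ne 0$ forces $k$ to be at most the independence number of $D_n$. I would establish $\alpha(D_n)=2n-1$, and more precisely that $I(n,k)=0$ as soon as $k \ge 2n$: in the closed form of Theorem~\ref{thm:indep_count_132} the factor $\binom{n}{k-j}$ vanishes unless $j \ge k-n$, which is incompatible with the summation bound $j \le n-1$ once $k \ge 2n$. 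Hence the only contributing indices satisfy $\lceil (k+1)/2\rceil \le n \le \ell-k$. (Geometrically this is the statement that the largest non-crossing subgraph of an $(n+1)$-gon is a triangulation, with $2n-1$ edges.)

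The heart of the argument is that for the two arithmetic progressions of rows named in the conjecture this interval contains exactly one integer. For $\ell = 2+3i$, $k = 1+2i$ both bounds equal $1+i$, and for $\ell = 1+3i$, $k = 2i$ both bounds again equal $1+i$; in each case the surviving binomial factor is $\binom{2i}{2i}=1$, respectively $\binom{2i-1}{2i-1}=1$, so that $T(\ell,k)$ equals a single value $I(1+i,\,\cdot\,)$. Running the same count one column further to the right makes the interval empty, which confirms that these positions are genuinely the last nonzero entries of their rows.

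It then remains to evaluate the two surviving coefficients. For the first family $k = 2n-1$ with $n = 1+i$, I would note that only $j=n-1$ survives the inner sum of Theorem~\ref{thm:indep_count_132}, giving
\[
I(n,2n-1) = \tfrac{1}{n}\binom{2n-2}{n-1} = C_{n-1},
\]
which is $C_i$ and proves part (1). For the second family $k = 2n-2$ with $n = 1+i$, the same mechanism leaves exactly the terms $j=n-2$ and $j=n-1$, yielding
\[
I(n,2n-2) = \binom{2n-3}{n-1}+\binom{2n-2}{n-1} = \binom{2i-1}{i}+\binom{2i}{i},
\]
which I would identify with the central entry of the $(1,2)$-Pascal triangle, proving part (2); the degenerate case $i=0$ (where $k=0$ and $T(1,0)=1$) is checked by hand.

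The individual steps are routine, and the two points needing genuine care are establishing the cutoff $I(n,k)=0$ for $k\ge 2n$ so that the summation interval is correctly bounded, and matching the near-maximal value $\binom{2i-1}{i}+\binom{2i}{i}$ against the published description of the central $(1,2)$-Pascal numbers in~\cite[A029651]{oeis}. I expect the latter to be the main obstacle, since it is the only place where an external identity rather than the paper's own machinery is invoked; alternatively, $C_{n-1}$ and this value can be read off from the non-crossing subgraph model, where they count the triangulations and the triangulations-minus-one-chord of an $(n+1)$-gon.
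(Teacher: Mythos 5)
There is nothing in the paper to compare your argument against: this statement is explicitly left as a conjecture, verified only computationally up to length $24$, so a correct write-up of your proposal would \emph{settle} it rather than reprove it. Having checked your steps against the paper's own results, the outline is sound. The support characterization is right: in Theorem~\ref{thm:indep_count_132} the factor $\binom{n}{k-j}$ forces $j \geq k-n$, which is incompatible with $j \leq n-1$ once $k \geq 2n$, so $I(n,k)=0$ there; conversely $I(n,k)>0$ for all $0 \leq k \leq 2n-1$, since subsets of independent sets are independent and $D_n$ has an independent set of size $2n-1$ (under Theorem~\ref{thm:bijection}, a triangulation of the $(n+1)$-gon). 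Since all summands in Proposition~\ref{prop:triangle} are nonnegative, the last nonzero entry of row $\ell$ sits at the largest $k$ for which some integer $n$ satisfies $\lceil (k+1)/2\rceil \leq n \leq \ell-k$, and your case analysis correctly pins this down to $(\ell,k,n)=(2+3i,\,2i+1,\,i+1)$ and $(1+3i,\,2i,\,i+1)$, each with binomial factor $1$. The evaluations also check out: for $k=2n-1$ only $j=n-1$ survives, giving $I(n,2n-1)=\tfrac{1}{n}\binom{2n-2}{n-1}=C_{n-1}$, and for $k=2n-2$ with $n\geq 2$ only $j\in\{n-2,n-1\}$ survive, giving $I(n,2n-2)=\binom{2n-3}{n-1}+\binom{2n-2}{n-1}=\tfrac{3}{2}\binom{2n-2}{n-1}$, which is exactly the central entry $\binom{2i}{i}+\binom{2i-1}{i-1}$ of row $2i$ of the $(1,2)$-Pascal triangle (whose entries are $\binom{m}{r}+\binom{m-1}{r-1}$), i.e.\ the sequence \cite[A029651]{oeis}; your separate hand-check of $i=0$ is indeed necessary, both because the two-term formula is derived under $n\geq 2$ and because of the $T(0,0)=1$ convention in that OEIS entry.

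Two points should be made explicit in the write-up, though neither is an obstacle: the monotonicity fact that $I(n,k)>0$ for every $k$ up to the independence number (without it you know where the sum \emph{can} be nonzero, but not that the columns you evaluate are genuinely the rightmost nonzero ones), and the observation that nonnegativity of the summands rules out any cancellation at those columns. With those two sentences added, your outline is a complete proof of both parts of the conjecture, obtained entirely from the paper's own machinery (Theorem~\ref{thm:indep_count_132} and Proposition~\ref{prop:triangle}).
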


The reader might wonder about the sequence of right-most numbers in the remaining rows.
We have not found a particular structure in that sequence, and
it does not appear in the Online Encyclopaedia of Integer Sequences~\cite{oeis}.

\section{Permutations avoiding $123$}
\label{sec:permutations_avoiding_123}

Consider drawing a permutation that avoids $123$ on a staircase grid. Take for
example the permutation $639871542$ shown in Figure~\ref{fig:lrmgrid-123}.
\begin{figure}[htp]
\begin{center}
\begin{tikzpicture}[scale=\picscales]
\thepoints[(0,2)]{3}
\labelthepointsbelow[(0,2)]{6,3,1}
\labeltheboxes[(0,3)]{1/2/\permaslabel{0.15}{0.25}{3,2,1}/0/0,
               2/3/\permaslabel{0.15}{0.25}{2,1}/-0.15/0,
               3/3/\permaslabel{0.15}{0.25}{1}/0.15/0}
\thegrid{3}
\end{tikzpicture}

\caption{The permutation $639871542$ drawn on a staircase grid. The
left-to-right minima, $6$, $3$ and $1$, are drawn on a diagonal.}
\label{fig:lrmgrid-123}
\end{center}
\end{figure}
Every rectangular region of boxes is decreasing, instead of increasing in the
case of the $132$-avoiding permutations, and the existence of points in a box
implies boxes southwest and northeast of it must be empty. We are naturally
lead to the following definition:

\begin{definition}\label{def:123core}
    Let $n \geq 0$ be an integer. The \emph{$123$-core of size $n$} is the
  labeled, undirected graph $U_n$ with vertex set $\{ (ij) \colon i = 1, \dots,
  n,\, j = i, \dots, n \}$ and an edge between $(ij)$ and $(k\ell)$ if $i > k$,
  $j < \ell$ and the rectangle $\{k, k+1, \ldots, i\}\times\{j, j+1,
  \ldots,\ell\}$ is a subset of the vertex set.
\end{definition}

\begin{figure}[ht]
\begin{center}
\begin{tikzpicture}[scale=\picscale]
    \thecoreB{1}

    \thecoreB[(2.5,-1)]{2}

    \thecoreB[(6,-2)]{3}

    \thecoreB[(10.5,-3)]{4}

    \thecoreB[(16,-4)]{5}

\end{tikzpicture}

\caption{The $123$-cores of sizes $1, \dots, 5$.}
\label{fig:123-cores}
\end{center}
\end{figure}

See Figure~\ref{fig:cellincore} (right) for the set of neighbours of a cell in a
$132$-core, and Figure~\ref{fig:123-cores} for the $123$-cores of sizes
$n = 1, \dots, 5$.
We use the letter ``$U$'' in anticipation of Definition~\ref{def:cores}.
The $123-$core $U_n$ is isomorphic to $D_n$ (the $132$-core of size $n$)
when $n = 0,1,2,3$. However, $D_4$ is a $5$-cycle, whereas $U_4$ is not.

Even though the $123$-cores are not isomorphic to the $132$-cores in general
they have the same number of cliques of each size. This can be
proven with the same method as Proposition~\ref{prop:cliquecount132}
\begin{proposition}
    \label{prop:cliquecount123}
    The number of cliques of size $k$ in $U_n$ is
    $\binom{n+1}{2k}$.
\end{proposition}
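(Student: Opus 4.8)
The plan is to run the same induction as in the proof of Proposition~\ref{prop:cliquecount132}, once the shape of a clique in $U_n$ is understood. Reading the adjacency of $U_n$ from the definition (equivalently, from the southwest/northeast constraints noted before it), two vertices $(ij)$ and $(pq)$ are joined exactly when one box lies strictly northeast of the other, i.e.\ when $(i-p)(j-q)<0$. Consequently a clique is a set of boxes that are pairwise in this relation, and if we list its vertices with strictly increasing first coordinate then the second coordinates must strictly decrease. Thus a clique of size $k$ is precisely a sequence $(i_1j_1),\dots,(i_kj_k)$ with $i_1<\dots<i_k$ and $j_1>\dots>j_k$ subject to the vertex constraints $i_t\le j_t$; since the $i_t$ increase while the $j_t$ decrease, the only binding constraint is $i_k\le j_k$.

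First I would record that two first-row vertices $(1j)$ and $(1q)$ are never adjacent, because $(1-1)(j-q)=0$; hence a clique meets the first row in at most one vertex, exactly as for $D_n$. I would then split the count. The cliques avoiding the first row lie in the induced subgraph on $\{(ij):2\le i\le j\le n\}$, which is a copy of $U_{n-1}$, and so contribute $\binom{n}{2k}$ by induction on $n$. For a clique whose unique first-row vertex is $(1j)$, the remaining $k-1$ vertices form a $(k-1)$-clique among the neighbors of $(1j)$; these neighbors are exactly the boxes $(pq)$ with $2\le p\le q\le j-1$, which induce a copy of the core $U_{j-2}$. By induction this case contributes $\sum_{j}\binom{j-1}{2k-2}=\binom{n}{2k-1}$, the sum being evaluated by the hockey-stick identity. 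Adding the two cases gives $\binom{n}{2k}+\binom{n}{2k-1}=\binom{n+1}{2k}$, as claimed; the base cases $k=1$ (all $\binom{n+1}{2}$ vertices) and small $n$ are immediate.

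The one real point to get right --- and the only genuine divergence from the $D_n$ argument --- is the identification of the neighborhood of $(1j)$. In $D_n$ this neighborhood is the \emph{rectangular} grid $[2,\dots,j]\times[j+1,\dots,n]$, a product of two chains, which is why that case produced the product $\binom{j-1}{k-1}\binom{n-j}{k-1}$ and a Vandermonde summation; in $U_n$ it is instead a \emph{triangular} region that is itself a smaller $123$-core $U_{j-2}$, producing the single binomial $\binom{j-1}{2k-2}$ and a hockey-stick summation. Both inner sums collapse to $\binom{n}{2k-1}$, which is exactly the structural reason these two non-isomorphic cores share their clique counts. A cleaner alternative, bypassing the induction, is to map a clique directly to the $2k$-subset $\{i_1,\dots,i_k\}\cup\{j_k+1,j_{k-1}+1,\dots,j_1+1\}$ of $\{1,\dots,n+1\}$: the constraint $i_k\le j_k$ is precisely what forces the two $k$-element blocks to be disjoint with the first lying entirely below the second, and recovering a clique by splitting an arbitrary $2k$-subset of $\{1,\dots,n+1\}$ into its smallest and largest $k$ elements inverts the map, giving $\binom{n+1}{2k}$ at once.
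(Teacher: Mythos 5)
Your argument is correct, but it is not the paper's argument. The paper proves this proposition by comparison with $D_n$ rather than by analysing $U_n$ on its own: it covers the staircase by the maximal rectangular regions $U_n^i$, shows (Lemma~\ref{lemma:incexc}) that each $U_n^i$, and each intersection of such regions, is isomorphic via a reflection to the corresponding piece $D_n^i$ of the $132$-core, notes that every clique lies inside at least one such rectangle, and then applies inclusion--exclusion to conclude $f_k(U_n) = f_k(D_n) = \binom{n+1}{2k}$, the last equality being Proposition~\ref{prop:cliquecount132}. You instead run the first-row induction of Proposition~\ref{prop:cliquecount132} directly on $U_n$ --- a route the paper mentions in passing (``can be proved with the same method as Proposition~\ref{prop:cliquecount132}'') but never writes out --- and you supply exactly the detail on which it hinges: the neighbourhood of a first-row vertex $(1j)$ is not a rectangle, as it is in $D_n$, but a triangular region inducing a copy of $U_{j-2}$, so the inner count is the single binomial $\binom{j-1}{2k-2}$ and the summation is a hockey-stick rather than a Vandermonde convolution, both collapsing to $\binom{n}{2k-1}$. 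Your closing bijection --- sending a clique to the $2k$-subset $\{i_1,\dots,i_k\}\cup\{j_k+1,\dots,j_1+1\}$ of $\{1,\dots,n+1\}$, inverted by splitting a subset into its $k$ smallest and $k$ largest elements --- is cleaner than either induction: it is the exact analogue of the paper's remark that an edge of $D_n$ corresponds to a three-element subset, and it explains the binomial coefficient with no recursion at all. What the paper's route buys in exchange is structural: it makes visible why two non-isomorphic graphs have identical clique counts (they are glued from isomorphic rectangular pieces), which is the whole point of Lemma~\ref{lemma:incexc}.

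One caution on your first step. You say the adjacency $(i-p)(j-q)<0$ can be read off ``from the definition''. It cannot: the displayed definition of $U_n$, taken literally, joins $(ij)$ and $(k\ell)$ when $i>k$ \emph{and} $j>\ell$, i.e.\ dominance rather than anti-dominance, and under that reading the proposition would be false (in $U_2$ the vertices $(11)$ and $(22)$ would be joined, yet $\binom{3}{4}=0$). The inequalities in that definition are a typo; the prose preceding it (``constraints on boxes southwest and northeast'') and the figures give the anti-dominance relation you use, which is the one consistent with the rest of the paper. So your proof stands, but it silently corrects the stated definition, and that correction should be made explicit rather than attributed to the definition itself.
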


Moreover, it turns out that the number of independent sets of each
size is the same in $123$-cores as in $132$-cores.

\begin{theorem} \label{thm:indep_count_123}
  The number of independent sets of size $k$ in the $123$-core of size $n$ is
  given by the coefficient of $x^ny^k$ in the generating function $F = F(x,y)$
  satisfying the functional equation
  \[
    F = 1 + xF +
    \frac{xyF^2}{1 - y(F - 1)}.
  \]
\end{theorem}

We leave it to the reader to prove this with the method used to prove
Theorem~\ref{thm:indep_count_132}, with the modification that we consider the
number of non-empty boxes on the diagonal instead of the top row, see
Figure~\ref{fig:123staircase_grid_with_shading}.

\begin{figure}[h]
\begin{center}
\begin{tikzpicture}[scale=\picscales]
   \thepoints[(0,8)]{9}
   \labeltheboxes[(0,9)]{3/3/\permaslabel{0.2}{0.25}{1}/0/0,
                  6/6/\permaslabel{0.2}{0.25}{1}/0/0,
                  8/8/\permaslabel{0.2}{0.25}{1}/0/0}
  \shadetheboxes[(0,9)]{1/1,1/4,1/5,1/6,1/7,1/8,1/9,
                 2/2,2/4,2/5,2/6,2/7,2/8,2/9,
                 3/7,3/8,3/9,
                 4/4,4/7,4/8,4/9,
                 5/5,5/7,5/8,5/9,
                 6/9,
                 7/7,7/9,
                 9/9}
   \thegrid{9}

   \draw (10, 5) node {$\leftrightarrow$};
    \begin{scope}[shift={(11,7.15)}]
        \thegrid{2}
        \thepoints[(0,1)]{2}
    \end{scope}
    \begin{scope}[shift={(11,3.75)}]
        \thegrid{3}
        \thepoints[(0,2)]{3}
    \end{scope}
    \begin{scope}[shift={(10.5,2)}]
        \thegrid{2}
        \thepoints[(0,1)]{2}
    \end{scope}
    \begin{scope}[shift={(11,0.7)}]
        \thegrid{1}
        \thepoints[(0,0)]{1}
    \end{scope}
\end{tikzpicture}
\caption{Illustration of how to prove Theorem~\ref{thm:indep_count_123} using
a modification of the method in the proof of Theorem~\ref{thm:indep_count_132}.}
\label{fig:123staircase_grid_with_shading}
\end{center}
\end{figure}

Theorem~\ref{thm:indep_count_123} implies that the bivariate generating function in
Theorem~\eqref{thm:indep_count_132}, the enumerations in
Proposition~\ref{prop:triangle} and Corollary~\ref{cor:indep_count_132},
and the Catalan triangles in Tables~\ref{tbl:Narayanatriangle} and~\ref{tbl:Newtriangle}
also apply to permutations avoiding $123$.

\section{Permutations avoiding $1324$}
\label{sec:permutations_avoiding_1324}

In this section we generalize the cores defined above with the goal of
enumerating subclasses of $\av{1324}$. We start with the theory of these new
cores. The first subclass considered is $\av{1324, 2143}$ in
subsection~\ref{sub:Smooth permutations}. In the following
two subsections we enumerate $\av{1234, 1324, 2143}$ and $\av{1234, 1324, 1432, 3214}$.
The method of proof for all of these classes is to consider the subsequence of points
in a permutation containing the left-to-right minima and the right-to-left maxima.
We then find a structural description of these \emph{boundaries}, and use the
generalized cores to find the enumeration of the original class.
The reason to focus on subclasses of the permutation class $\av{1324}$, is because it is the
only unenumerated class avoiding a single length 4 pattern. The theory we develop
also applies to subclasses of $\av{1234}$, which has a known enumeration,
first found by Gessel~\cite{Gessel:1990bq},
and subclasses of $\av{1234, 1324}$, which is conjectured to be
enumerated by a non-$D$-finite generating function, see Albert et al.~\cite[Section 6.2]{resconts}.

For these classes of permutations, the staircase encoding defined above is no
longer unique, the simplest example being the permutations $123$ and $132$ which
belong to all three classes and have the same staircase encoding. To remedy this
we consider the right-to-left maxima (rlm) of a permutation:

\begin{definition}
	Given a permutation $\pi$, we define its \emph{boundary},
	$\bd{\pi}$, as the standardization of the subsequence of $\pi$ containing the
	left-to-right minima and the right-to-left maxima.
\end{definition}

For example if $\pi =
(11)(12)(13)(10)293671845$, as in Figure~\ref{fig:niceboundary}, then $\bd{\pi}$ is
the standardization of $(11)(13)(10)29185$, i.e., $87625143$. By
construction $\bd{\pi}$ avoids $123$, and every permutation that avoids $123$
is its own boundary.

\begin{definition}
	Given a permutation $\pi$ that avoids $123$ we define its \emph{boundary grid},
	$\bg{\pi}$, as: (1) the collection of $1$-by-$1$ boxes whose corners have integer
	coordinates, with the requirement that the lower left corner of each box is
	northeast of a left-to-right minimum, and the upper right corner of each box is
	southwest of a right-to-left maximum; and (2) the collection of points which
	are either a left-to-right minimum, or a right-to-left maximum.
\end{definition}

For example, the boundary grid of $87625143$ is the boundary grid in
Figure~\ref{fig:niceboundary}.
For a permutation $\pi$ we create its (\emph{boundary}) \emph{encoding}, $\benc{\pi}$,
by writing the number of points in each box of the boundary grid of $\bd{\pi}$; see Figure~\ref{fig:niceboundary} (right).

\begin{figure}[ht]
\begin{center}
\begin{tikzpicture}[scale=\picscales]

	\begin{scope}[shift={(1,3)}]
		\thegrid{1}
		\thepoints[(0,0)]{1}
		\thepoints[(1,1)]{1}
		\thepoints[(2,-1)]{1}
		\labelthepointsbelow[(0,-0.1)]{11}
		\labelthepointsabove[(1,1.1)]{13}
		\labelthepointsbelow[(2,-1.1)]{10}
		\labeltheboxes[(0,1)]{1/1/\permaslabel{0.2}{0.25}{1}/0/0}
	\end{scope}

    \begin{scope}[rotate=-90, yscale=-1, shift={(0,-8)}]
	    \labelthepointsbelow[(0,1)]{8,5}
		\labelthepointsbelow[(-1,3)]{9}
        \labelthepointsabove[(2,4)]{2}
        \labelthepointsabove[(3,2)]{1}
        \thepoints[(1,0)]{1}
        \thepoints[(0,1)]{1}
        \thepoints[(-1,3)]{1}
        \thepoints[(2,4)]{1}
        \thepoints[(3,2)]{1}
        \thegridext{2}
        \labeltheboxes[(0,4)]{2/2/\permaslabel{0.2}{0.25}{1}/0.15/0.15,
                    4/2/\permaslabel{0.2}{0.25}{1}/-0.15/-0.15,
                    2/1/\permaslabel{0.2}{0.25}{1,2}/0/-0.15}
    \end{scope}

    \begin{scope}[shift={(11,3)}]
		\thegrid{1}
		\thepoints[(0,0)]{1}
		\thepoints[(1,1)]{1}
		\thepoints[(2,-1)]{1}
		\labeltheboxes[(0,1)]{1/1/1/0/0}
	\end{scope}

    \begin{scope}[rotate=-90, yscale=-1, shift={(-1,-6)}]
        \thegridext{2}
    \end{scope}

    \begin{scope}[shift={(10,0)}]
    \begin{scope}[rotate=-90, yscale=-1, shift={(0,-8)}]
        \thepoints[(1,0)]{1}
        \thepoints[(0,1)]{1}
        \thepoints[(-1,3)]{1}
        \thepoints[(2,4)]{1}
        \thepoints[(3,2)]{1}
        \thegridext{2}
        \labeltheboxes[(0,4)]{2/2/1/0/0,
                    4/2/1/0/0,
                    2/1/2/0/0}
    \end{scope}
    \begin{scope}[rotate=-90, yscale=-1, shift={(-1,-6)}]
        \thegridext{2}
    \end{scope}
    \end{scope}
\end{tikzpicture}

\caption{On the left, the permutation $(11)(12)(13)(10)293671845$ is drawn on a boundary grid
highlighting its lrm's and rlm's. On the right, we have the boundary encoding
of the same permutation.}
\label{fig:niceboundary}
\end{center}
\end{figure}

We generalize the construction of the $132$- and $123$-cores above:

\begin{definition} \label{def:cores}
  A \emph{grid} is a collection of $1$-by-$1$ boxes whose corners
  have integer coordinates, along with a collection of points with integer coordinates,
  such that no two points lie on the same horizontal or vertical line.\footnote{For consistency
  with the staircase grids we use matrix notation here as well. We use the convention that
  the left-most box, or point, of a grid has $x$ coordinate $1$, and the top-most box, or
  point, of a grid has $y$ coordinate $1$.}
  Given such a grid $B$ we define the
  \begin{enumerate}
    \item \emph{down-core} as the graph $D(B)$ whose vertices are the boxes in the grid
    and an edge between boxes $(ij)$, $(k\ell)$ if $i < k$, $j < \ell$ and
    the rectangle $\{i, i+1, \dots, k\} \times \{j, j+1, \dots, \ell\}$ is
    a subset of $B$.
    \item \emph{up-core} as the graph $U(B)$ whose vertices are the boxes in the grid
	and an edge between boxes $(ij)$, $(k\ell)$ if $i > k$, $j < \ell$
    and the rectangle $\{k, k+1, \dots, i\} \times \{j, j+1, \dots, \ell\}$ is
    a subset of $B$.
    \item \emph{updown-core} as the graph $UD(B)$ whose vertices are the boxes in the grid
    and an edge between boxes $(ij)$, $(k\ell)$ if one of the
    following conditions is satisfied:
    (a) There is an edge between $(ij)$, $(k\ell)$ in $D(B)$;
    (b) there is an edge between $(ij)$, $(k\ell)$ in $U(B)$;
    (c) if $i = k$, $\ell < j$ and rectangle $\{i \} \times \{\ell, \ell+1, \dots, j\}$
    is a subset of $B$;
    (d) if $i < k$, $j = \ell$ and rectangle $\{i, i+1, \dots, k \} \times \{ j \}$
    is a subset of $B$.
  \end{enumerate}
\end{definition}

See Figure~\ref{fig:allthecores} for an example of the different types of cores.

\begin{figure}[h]
	\centering
	\begin{tikzpicture}[scale=\picscales]

			\someboxes{1/0, 3/0, 4/0, 5/0, 6/0,
					   1/1, 2/1, 3/1, 4/1, 5/1, 6/1, 7/1, 8/1,
					   1/2, 2/2, 3/2, 4/2, 5/2, 6/2, 7/2,
					   2/3, 3/3, 4/3, 5/3, 6/3, 7/3, 8/3,
					   1/4, 3/4, 4/4, 5/4, 6/4,
					   6/5, 8/5,
					   8/6}
			\somepoints{1/0, 2/3, 9/4, 7/6, 3/7}
        	\node[mynode] (4 5) at (5-0.5,3-0.5) {\tiny$5 4$};
        	\shadetheboxes[(0,7)]{3/4,
        	                      4/3, 4/4,
        	                      6/6, 6/7, 6/8,
        	                      7/6, 7/7}

			\begin{scope}[shift={(10.3,0)}]
			\someboxes{1/0, 3/0, 4/0, 5/0, 6/0,
					   1/1, 2/1, 3/1, 4/1, 5/1, 6/1, 7/1, 8/1,
					   1/2, 2/2, 3/2, 4/2, 5/2, 6/2, 7/2,
					   2/3, 3/3, 4/3, 5/3, 6/3, 7/3, 8/3,
					   1/4, 3/4, 4/4, 5/4, 6/4,
					   6/5, 8/5,
					   8/6}
			\somepoints{1/0, 2/3, 9/4, 7/6, 3/7}
        	\node[mynode] (4 5) at (5-0.5,3-0.5) {\tiny$5 4$};
        	\shadetheboxes[(0,7)]{3/6, 3/7,
        	                      4/6, 4/7, 4/8,
        	                      6/2, 6/3, 6/4,
        	                      7/4}
			\end{scope}

			\begin{scope}[shift={(20.6,0)}]
			\someboxes{1/0, 3/0, 4/0, 5/0, 6/0,
					   1/1, 2/1, 3/1, 4/1, 5/1, 6/1, 7/1, 8/1,
					   1/2, 2/2, 3/2, 4/2, 5/2, 6/2, 7/2,
					   2/3, 3/3, 4/3, 5/3, 6/3, 7/3, 8/3,
					   1/4, 3/4, 4/4, 5/4, 6/4,
					   6/5, 8/5,
					   8/6}
			\somepoints{1/0, 2/3, 9/4, 7/6, 3/7}
        	\node[mynode] (4 5) at (5-0.5,3-0.5) {\tiny$5 4$};
        	\shadetheboxes[(0,7)]{3/6, 3/7,
        	                      4/6, 4/7, 4/8,
        	                      6/2, 6/3, 6/4,
        	                      7/4}
        	\shadetheboxes[(0,7)]{3/4,
        	                      4/3, 4/4,
        	                      6/6, 6/7, 6/8,
        	                      7/6, 7/7}
        	\shadetheboxes[(0,7)]{3/5,
        	                      4/5,
        	                      5/2, 5/3, 5/4, 5/6, 5/7, 5/8,
        	                      6/5,
        	                      7/5}
			\end{scope}

	\end{tikzpicture}
	\caption{The cell $(54)$ with its set of neighbours shaded; on the
	left in the down-core, in the middle in the up-core, and on the right in the updown-core.}
	\label{fig:allthecores}
\end{figure}

We note that the $132$-core is the down-core for the staircase grid while the
$123$-core is the up-core for the same grid.

For two permutations $\sigma$ and $\tau$ the \emph{skew-sum}, $\sigma \ominus \tau$,
is the permutation created by concatenating $\sigma$ and $\tau$
after adding the length of $\tau$ to each element of $\sigma$. For example, $1432
\ominus 3142 = 58763142$. A permutation is \emph{skew-decomposable} if it can be
written as a non-trivial skew-sum. By convention, the empty permutation is
skew-decomposable.

\begin{lemma}
	A $123$-avoiding permutation $\pi$ of length greater than $1$ is skew-indecomposable
	if and only if
	$UD(\bg{\pi})$ is connected and every left-to-right minimum is attached to some
	$1$-by-$1$ box in $\bg{\pi}$.
\end{lemma}

\begin{proof}
	Assume $\pi$ is skew-decomposable as $\pi = 	\sigma \ominus \tau$ where $\sigma$ has
	length $s$, and $\tau$ is skew-indecomposable. If $\tau = 1$ then it is a left-to-right
	minimum not attached to a box. Otherwise there is at least one box in the boundary grid
	of $\tau$. Either $\sigma$ has a boundary grid consisting of just integer points,
	in which case we are done; or it has at least one box which is disconnected from the box
	in $\tau$.

	For the other direction, if there is a left-to-right minimum that is not
	attached to a box then it is a component in the skew-decomposition of $\pi$.
	Otherwise there must be exist two distinct subsets, $B_1$, and $B_2$, of boxes which are
	connected components. We can assume that all the boxes in $B_1$ are to the left and above
	all the boxes in $B_2$. The left-to-right minima, and the right-to-left maxima, attached
	to boxes in $B_1$ must be to the left and above the corresponding points attached to
	boxes in $B_2$. This implies that the points attached to $B_1$ are in a different component
	than the points attached to $B_2$ are, in the skew-decomposition of $\pi$.
\end{proof}

For the subclasses considered below we will begin with finding the generating function
for the skew-indecomposable permutations in the class. We will need rely on the following
easily proven remark to translate such a generating function to the entire class.

\begin{remark}\label{rmk:skewclosure}
  If a permutation class $\mathcal{C}$ has basis $P$ and the patterns in $P$ are
  skew-indecomposable then the skew-sum of two permutations from $\mathcal{C}$ is another
  permutation in $\mathcal{C}$. For such classes the generating functions enumerating the
  class, $C$, and the skew-indecomposable permutations in the class, $C_{ind}$ are related
  by the formula
  \[
  	C = \frac{1}{1-C_{ind}},
  \]
  because every permutation in the class has a unique decomposition into skew-indecomposable
  permutations. This property holds for the classes studied in this section.
\end{remark}

\begin{definition}
	Let $B$ be the boundary grid of a permutation $\pi$ avoiding $123$.
	\begin{enumerate}
		\item Let $\dperms{B}$ be the set of permutations obtained by choosing a
        weighted independent set in the down-core $D(B)$ and inflating the vertices
        of the independent set into a increasing sequence of points, whose length
        is determined by the weight of the vertex, while making sure every rectangular
        region inside the grid is increasing.
        \item Likewise, we define $\uperms{B}$ using the up-core, by inflating weights
        into decreasing sequences.
		\item Finally we define $\udperms{B}$ using the updown-core, by inflating an (unweighted)
		independent set into a single point for each vertex.
	\end{enumerate}
\end{definition}

For example, consider the boundary grid on the right in Figure~\ref{fig:niceboundary},
with the set of vertices $(11), (55), (65), (67)$ with weights $1, 2, 1, 1$. These
vertices form an independent set in the down-core of this boundary grid. If we inflate
the vertices into increasing sequences with length determined by the weights (making
sure every rectangular region inside the grid contains an increasing sequence of points)
we reclaim the permutation $(11)(12)(13)(10)293671845$. This same set of vertices is an
independent set in the up-core of the same grid. If we instead inflate the vertices into
decreasing sequences we obtain the permutation $(11)(12)(13)(10)297641835$. Finally,
these vertices are not an independent set in the updown-core on the grid, since there
are edges between the vertices $(55)$, $(65)$, as well as $(65)$, $(67)$.

With these definitions we can state the main result on these new cores:

\begin{proposition}\label{prop:disjoint}
	\begin{enumerate}
	\item There is a bijection from the set of permutations avoiding $1324$
	and pairs $(B, I)$ where $B$ is the boundary grid of a $123$-avoiding permutation
	and $I$ is a weighted independent set in the down-core of $B$. In other words
	\[
    	\av{1324} = \bigsqcup_{\pi \in \av{123}} \dperms{\bg{\pi}}.
	\]\label{prop:disjoint1}
	\item Similarly, there is a bijection from the set of permutations avoiding $1234$
	and pairs $(B, I)$ where $B$ is the boundary grid of a $123$-avoiding permutation
	and $I$ is a weighted independent set in the up-core of $B$. In other words
	\[
    	\av{1234} = \bigsqcup_{\pi \in \av{123}} \uperms{\bg{\pi}}.
	\]\label{prop:disjoint2}
	\item Finally, there is a bijection from the set of permutations avoiding $1234$ and $1324$
	and pairs $(B, I)$ where $B$ is the boundary grid of a $123$-avoiding permutation
	and $I$ is an (unweighted) independent set in the updown-core of $B$. In other words
	\[
    	\av{1234, 1324} = \bigsqcup_{\pi \in \av{123}} \udperms{\bg{\pi}}.
	\]\label{prop:disjoint3}
	\end{enumerate}
\end{proposition}

\begin{proof}
	We only prove the statement for $\av{1324}$ as the other statements can be proved by
	the same arguments. Let $\pi$ be a permutation in $\av{1324}$. The boundary $\bd{\pi}$
	is a $123$-avoiding permutation	and we need to prove that the boundary encoding, $\benc{\pi}$,
	of $\pi$ corresponds to a weighted independent set in the down-core of $\bg{\pi}$. It is
	clear that the encoding corresponds to a weighted set of vertices in the core. Assume that
	there is an edge between two of these vertices, say $(ij)$, $(k\ell)$, with $i < k$,
	$j < \ell$. By the definition of the down-core this implies that the rectangle
	$\{i, i+1, \dots, k\} \times \{j, j+1, \dots, \ell\}$ is a subset of the core. By
	the definition of the boundary grid there is at least one left-to-right minimum southwest
	of this rectangle, as well as a right-to-left maximum to the northeast of it. But these
	two boundary points, together with a point from the box $(ij)$ and a point from the box
	$(k\ell)$ form an occurrence of $1324$. This is a contradiction, and therefore there cannot
	be an edge between any of these vertices. This shows that $\av{1324}$ is a subset of the
	disjoint union on the right. The other subset relation is similar and left to the reader.
\end{proof}

Our main focus will be on subclasses of the first and third permutation classes
in the equations above, as their enumerations are unknown. We start by
considering the subclass $\av{1324, 2143}$ of $\av{1324}$.

\subsection{Down-cores and smooth permutations}
\label{sub:Smooth permutations}
The \emph{smooth} permutations are those that correspond to smooth Schubert varieties.
Sandhya and Lakshmibai~\cite{lakshmibai} showed that these permutations
are the class $\av{1324, 2143}$. The enumeration first appeared in an
unpublished preprint of Haiman~\cite{SmoothHaiman} and is given by the generating
function
\begin{equation} \label{eqn:smoothgenfunc}
    \frac{1-5z+3z^2+z^2\sqrt{1-4z}}{1-6z+8z^2-4z^3}.
\end{equation}
Bousquet-M\'{e}lou and Butler~\cite{SmoothBousquet} provided an independent
proof of this generating function, and Slofstra and
Richmond~\cite{SmoothRichmond} have found the enumeration of smooth Schubert
varieties of all classical finite types, the case of permutations being type
$A$. Below we will rederive the generating function~\eqref{eqn:smoothgenfunc}
while keeping track of the number of boundary points and size of the
independent set.

Our approach uses the equation
\begin{equation} \label{eqn:av13242143}
    \av{1324,2143} = \bigsqcup_{\pi \in \av{123,2143}} \dperms{\bg{\pi}},
\end{equation}
which follows from Proposition~\ref{prop:disjoint}~\eqref{prop:disjoint1}, and the fact that
a $1324$-avoiding permutation $\pi$ contains $2143$ if and only if $\bd{\pi}$ contains $2143$.
West~\cite{West:1996cy} proved that the enumeration of the boundaries $\av{123,2143}$ is
given by the alternate Fibonacci numbers~\cite[A001519]{oeis}. The following lemma
gives a structural description of the boundaries suitable for our purposes.

\begin{definition}\label{lem:symm}
    For a boundary grid $B$, we define the \emph{reflected} grid, $B^\prime$, as
    the grid obtained by reflecting $B$ along a southeast-northwest diagonal.
\end{definition}
It follows from Definition~\ref{def:cores} that $D(B)$ is isomorphic to $D(B^\prime)$,
$U(B)$ is isomorphic to $U(B^\prime)$, and $UD(B)$ is isomorphic to $UD(B^\prime)$.

\begin{lemma} \label{lem:skew-ind1232143}
    Let $\pi$ be a skew-indecomposable permutation in $\av{123,2143}$ of
    length greater than one. The boundary grid of $\pi$ is a union of a sequence
    of grids, which alternate between staircase grids and reflected staircase grids,
    which share their extremal boxes; see
    Figure~\ref{fig:smooth-block} (left).
\end{lemma}

\begin{proof}
	Let $\pi$ be a skew-indecomposable permutation that avoids $123$ and $2143$. We assume
	$\pi$ starts with a descent, i.e., $\pi_1 > \pi_2$. Let
	$\pi_1 > \pi_2 > \dots > \pi_s$ be the initial decreasing sequence of $\pi$,
	so $\pi_{s+1} > \pi_s$ (if $\pi_s$ were the last point in $\pi$ then it
	would be skew-decomposable). If $\pi_1 > \pi_{s+1}$ then there must be be a point $p$
	in $\pi$ to the right of $\pi_{s+1}$ with $p > \pi_1$, otherwise $\pi$ is skew-decomposable.
	Then $\pi_s \pi_{s+1} p$ is an occurrence of $123$, a contradiction.
	Therefore $\pi_{s+1} > \pi_1$. If $\pi_{s+1}$ is the last point in the permutation
	the claim holds. If $\pi_{s+1}$ is not the last point in the permutation then
	the difference in values between $\pi_i$ and $\pi_{i+1}$ for $i = 1, \dots, s-2$
	must be $1$, for if this failed for a specific $i$ the subsequence
	$\pi_{i+1} \pi_{i+2} \pi_{s+1} p$ (where $p$ is a point in the permutation with
	value between $\pi_i$ and $\pi_{i+1}$ and index greater than $s+1$) is an occurrence
	of $2143$. It follows
	that rows $1, 2, \dots, s$ in $\bg{\pi}$ have the shape of a staircase grid $B_s$.
	If $\pi_{s-1} - \pi_s = 1$ then this is the entirety of the boundary grid of $\pi$.
	Otherwise we consider the decreasing sequence of points northeast of the point
	$\pi_s$. A very similar argument shows that rows $s, s+1, \dots, s+r$ (where $r+1$
	is the length of the decreasing sequence), up to and including column $s+r$ have
	the shape of a reflected staircase grid $B_r'$. This alternation between staircase
	grids, and their reflections, persists until the end of the permutation.

	The case where $\pi$ starts with an ascent, i.e., $\pi_1 < \pi_2$ can be done with
	the same argument, the only difference is that the top-most box, which is then alone
	in the top row, is then the first staircase grid $B_1$.
\end{proof}

To enumerate the class $\av{1324,2143}$ we need to enumerate the independent sets
of the down-cores on the boundary grids described by the previous lemma. To this end
we need to split those grids into smaller grids of the following type:
Let $EB_n$ be the grid obtained by doubling the top-most column in
the staircase grid $B_n$, see Figure~\ref{fig:eb}. Let $ED_n$ be the down-core of this grid.\footnote{``E'' is
for extended.}

\begin{figure}[ht]
\begin{center}
\begin{tikzpicture}[scale=\picscales]
    \thepoints[(0,3)]{4}
    \thegridextf{4}
\end{tikzpicture}
\caption{The grid $EB_4$.}
\label{fig:eb}
\end{center}
\end{figure}

\begin{lemma} \label{lem:G}
    Let $G = G(x,y)$ be the generating function where the coefficient of
    $x^n y^k$ is the number of independent sets of size $k$ in $ED_n$. Then
    \[
        G = \frac{F-1}{(1+y)x},
    \]
    where $F = F(x,y)$ is the generating function in
    Theorem~\ref{thm:indep_count_132}.
\end{lemma}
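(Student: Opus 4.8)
The plan is to identify the extended core $ED_a$ with an ordinary $132$-core from which a single isolated vertex has been deleted, and then read off the stated relation directly from that deletion.

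First I would pin down the geometry of $EB_a$. The staircase grid $B_a$ consists of the boxes $(ij)$ with $1 \le i \le j \le a$, its right-most (and tallest) column being column $a$, which meets every row $1, \dots, a$. Doubling this column appends one box $(i, a+1)$ to each row $i = 1, \dots, a$. Comparing box sets, the result is exactly $B_{a+1}$ with the single corner box $(a+1,a+1)$ removed: indeed $B_{a+1} = \{(ij) : i \le j \le a+1\}$, and deleting $(a+1,a+1)$ leaves the boxes $(ij)$ with $i \le j \le a$ together with $(i,a+1)$ for $i = 1, \dots, a$, which is precisely $EB_a$. Next I would show that passing to down-cores respects this deletion. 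By the degree formula recorded earlier, the degree of $(ij)$ in $D_n$ equals $(j-i)(n-1-(j-i))$, so the corner vertex $(nn)$ has degree $0$; in particular $(a+1,a+1)$ is isolated in $D_{a+1}$. It then remains to check that no down-core edge between two \emph{other} vertices uses the box $(a+1,a+1)$ inside its defining rectangle: such a rectangle $\{i,\dots,k\} \times \{\ell,\dots,j\}$ could contain $(a+1,a+1)$ only if $k = j = a+1$, which would force it to contain boxes $(a+1,\ell')$ with $\ell' < a+1$ that do not lie in the staircase $B_{a+1}$, contradicting the requirement in Definition~\ref{def:cores} that the rectangle be a subset of the grid. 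Hence every edge of $D_{a+1}$ avoids the corner box, and $ED_a$ is precisely $D_{a+1}$ with the isolated vertex $(a+1,a+1)$ deleted.

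Finally I would translate this into generating functions. Deleting an isolated vertex gives a clean recursion on independent sets: an independent set of size $k$ in $D_{a+1}$ is an independent set of $ED_a$ of size $k$ or $k-1$, according to whether it uses the corner vertex, so $[x^{a+1}]F = (1+y)\,[x^a]G$ for every $a \ge 0$, where $F$ is the generating function of Theorem~\ref{thm:indep_count_132}. Multiplying this identity by $x^{a+1}$ and summing over $a \ge 0$, and using $[x^0]F = 1$ (the empty core) to absorb the constant term, yields $F - 1 = (1+y)\,x\,G$, that is $G = (F-1)/((1+y)x)$. The only genuinely delicate points are fixing the precise meaning of \emph{doubling the right-most column} and the induced-subgraph check that the corner box never lies inside an edge-rectangle; once those are settled, the isolated-vertex deletion and the index bookkeeping (the shift by one, together with the constant term $[x^0]F = 1$, which is consistent with $[x^1]F = 1+y$ coming from the single-vertex core $D_1$) are routine.
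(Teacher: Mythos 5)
Your proof is correct and follows essentially the same route as the paper: the paper's (much terser) argument likewise observes that adding one box at the bottom of the last column of $EB_a$ yields $B_{a+1}$, so that the isolated diagonal corner box contributes a free factor $(1+y)$ and one extra left-to-right minimum contributes $x$, giving $F = (1+y)xG + 1$. Your additional checks --- that $(a+1,a+1)$ is isolated in $D_{a+1}$ and that no edge-rectangle of the down-core can contain the corner box --- are exactly the details the paper leaves implicit.
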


\begin{proof}
    If a box (with an attached left-to-right minimum) is added on the left
    of the top-most row of $EB_n$, then no new edges are added to the core.
    In fact we obtain the core $B_{n+1}$. Therefore
    $F = (1+y)xG + 1$ and solving for $G$ gives the claimed equation.
\end{proof}

With this in hand we can enumerate the independent sets of the cores in
Lemma~\ref{lem:skew-ind1232143}.

\begin{lemma}\label{lem:pind}
    Let $P_{ind} = P_{ind}(x,y)$ be the generating function where the
    coefficient of $x^n y^k$ is the number of independent sets of size $k$ in
    down-cores on boundaries of skew-indecomposable permutations from
    $\Av_n(123,2143)$. Then
    \[
        P_{ind} = \frac{x(F-1)}{2-G} + x,
    \]
    where $F = F(x,y)$ is the generating function in
    Theorem~\ref{thm:indep_count_132}, and $G = G(x,y)$ is the generating
    function in Lemma~\ref{lem:G}.
\end{lemma}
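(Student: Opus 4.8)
The plan is to read the generating function off the structural decomposition in Lemma~\ref{lem:skew-ind1232143} by means of a sequence (transfer) construction, using the extended cores $ED_a$ of Lemma~\ref{lem:G} to account for the box that two consecutive staircases share. By Lemma~\ref{lem:skew-ind1232143} the boundary grid of such a permutation (of length at least two) is a chain of staircase grids $B_{s_1}, \dots, B_{s_m}$, alternately reflected in the diagonal, in which the north-westernmost box of each grid is identified with the south-easternmost box of the next. By Remark~\ref{rem:symm} reflection in $y = x$ does not change the isomorphism type of the down-core, so every block behaves as an ordinary $132$-core $D_{s_i}$ and its independent sets are governed by $F$ through Theorem~\ref{thm:indep_count_132}.

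The first thing I would prove is that the two extreme boxes of any staircase — the north-westernmost box $(1,1)$ and the south-easternmost box $(s,s)$ — are isolated vertices of $D_s$; indeed every diagonal box is isolated, because the rectangle required for an edge out of a diagonal box would have to leave the grid. Since consecutive staircases meet in exactly one box and that box is isolated in both, no edge of the down-core crosses a join, and building the chain amounts to taking the disjoint union of the block down-cores and identifying isolated corner vertices in pairs. The only care needed in counting independent sets is therefore that each shared box be counted once rather than once per block.

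This is exactly what a sequence construction delivers. Reading the chain from the free end, the first staircase keeps both of its (isolated) corner boxes and contributes $F - 1$, the generating function for all nonempty $132$-cores. Every later staircase has its south-easternmost box already counted (it is the north-westernmost box of its predecessor), so it must be counted with that corner deleted: this is precisely the extended core $ED_a = D_{a+1} \setminus \{(a+1,a+1)\}$, whose independent-set generating function is $G$ by Lemma~\ref{lem:G}. Discarding the empty term of $G$ (which records the absence of a further block) leaves $G - 1$ per genuine additional staircase, so a chain of any length contributes $(F-1)\sum_{m \ge 0}(G-1)^m = \frac{F-1}{1-(G-1)} = \frac{F-1}{2-G}$; the denominator is $2 - G$ rather than $1 - G$ precisely because $G$ itself carries the constant term $1$ coming from the empty extended core $ED_0$. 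Tracking boundary points by $x$ rather than diagonal boxes introduces one extra point per chain (a single staircase $B_s$ is the boundary of a skew-indecomposable permutation on $s+1$ points, e.g.\ $B_1$ corresponds to $12$), which is supplied by the prefactor $x$, giving the main term $\frac{x(F-1)}{2-G}$; the additive $x$ records the single-point permutation, the atom of the skew-sum decomposition. Finally, substituting $y = x/(1-x)$ replaces each chosen vertex by a monotone run of length at least one, as in the definition of $\dperms{B}$, so that $x$ now counts the length of the inflated permutation and $P_{ind}(x, x/(1-x))$ enumerates the skew-indecomposable members of $\av{1324,2143}$.

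The main obstacle is the shared-box bookkeeping and the size normalization it forces. Counting each joint box exactly once is what breaks the symmetry between the first block (counted by $F$, both corners present) and every subsequent block (counted by $G$, one corner suppressed), and it is this asymmetry — not any difficult enumeration — that produces the extended core and the denominator $2 - G$. The step I would check most carefully is that the power of $x$ contributed by each block equals the number of new boundary points it brings (a staircase overlaps its neighbour in one diagonal box, so an added block of size $s$ brings $s - 1$ new diagonal boxes, matching the $x^{s-1}$ inside $G - 1$), and in particular the degenerate single-box blocks, where the two corners coincide, must be reconciled by hand against the smallest permutations to justify the prefactor $x$ and the base term $+x$.
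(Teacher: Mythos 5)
Your decomposition is the same one the paper uses (first block counted by $F-1$, each later block by $G-1$ after deleting the shared corner, then $x(F-1)/(2-G)+x$), and several of your supporting observations are correct and even more explicit than the paper's: diagonal boxes are indeed isolated in the down-core, and $ED_a$ is indeed $D_{a+1}$ with its south-easternmost vertex removed. However, there is a genuine gap in the step ``a chain of any length contributes $(F-1)\sum_{m\ge 0}(G-1)^m$'': you never reconcile the alternating reflections with the fact that $F$ and $G$ are blind to reflection. Two mirror-image chains with the same size sequence are \emph{different} boundary grids of \emph{different} permutations ($\bg{213}=B_2$ versus $\bg{132}=$ reflected $B_2$; likewise $\bg{2413}$ versus $\bg{3142}$), yet under your encoding both members of such a pair map to the \emph{same} term of $(F-1)\sum_m(G-1)^m$ --- e.g.\ both $\bg{213}$ and $\bg{132}$ read as ``one nonempty staircase of size $2$,'' the $x^2$ coefficient of $F-1$. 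Summing $F-1$ once over sizes therefore counts each mirror pair once instead of twice, while the terms of the expansion whose first factor is the $x^1$ term of $F-1$ followed by at least one factor $G-1$ correspond to no grid at all in your scheme. (Check length $3$: the answer must be $2x^3(1+y)^3$, one copy for each of $213$ and $132$.) The totals happen to balance only because of the identity $F-1=(1+y)xG$ from Lemma~\ref{lem:G}, which you never invoke.

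This is exactly the point the paper settles with a convention you are missing: the south-easternmost block is \emph{always} taken to be an unreflected $B_a$, and a grid whose chain begins with a reflected staircase is encoded instead as the degenerate block $B_1$ (a single box) followed by a reflected extended grid --- in the paper's words, ``we always first place a $B_a$ since if we first place a box, this together with the first reflected $ED_a$ will form a reflected $B_a$.'' With that convention the map from boundary grids to sequences (one nonempty $B_a$, then nonempty extended grids with orientations forced by alternation) is a bijection and the product formula is legitimate; without it, the scheme either undercounts grids or counts $\bg{132}$ twice, once as $(2)$ and once as $(1;2)$. Your closing worry about ``degenerate single-box blocks'' is aimed at the wrong target: they are not a quirk to reconcile with the terms $x$ and $+x$, they are precisely the device that encodes chains whose free end is a reflected staircase. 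A secondary, smaller point: ``the shared box is isolated in both blocks'' does not by itself rule out down-core edges between non-shared boxes of adjacent blocks; one should also note that any rectangle joining the two blocks must contain a box strictly north-east or south-west of the shared box, and no such box lies in the grid.
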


\begin{figure}[ht]
\begin{center}
\begin{tikzpicture}[scale=\picscales]
    \thepoints[(0,1)]{2}
    \thepoints[(3,2)]{1}
    \thegridext{2}
    \begin{scope}[rotate=-90, yscale=-1, shift={(0,-5)}]
        \thepoints[(1,1)]{2}
        \thepoints[(4,3)]{1}
        \thegridext{3}
    \end{scope}
    \begin{scope}[shift={(5,-5)}]
        \thepoints[(1,0)]{1}
        \thepoints[(2,2)]{1}
        \thegrid{2}
    \end{scope}

    \begin{scope}[shift={(11,-1)}]
        \begin{scope}[shift={(0,0.2)}]
            \thepoints[(0,2)]{2}
            \thepoints[(3,3)]{1}
            \thegrid{3}
        \end{scope}
        \begin{scope}[rotate=-90, yscale=-1, shift={(0,-6)}]
            \thepoints[(0,2)]{2}
            \thepoints[(3,4)]{1}
            \thegridextf{3}
        \end{scope}
        \begin{scope}[shift={(6.2,-4)}]
            \thepoints[(0,0)]{1}
            \thepoints[(1,2)]{1}
            \thegridextf{1}
        \end{scope}
    \end{scope}
\end{tikzpicture}
\caption{A skew-indecomposable boundary in $\av{123,2143}$ is shown on the left, and on
the right how it is decomposed in the proof of Lemma~\ref{lem:pind}.}
\label{fig:smooth-block}
\end{center}
\end{figure}

\begin{proof}
	As in Lemma~\ref{lem:skew-ind1232143} we consider a skew-indecomposable permutation $\pi$
	that avoids $123$ and $2143$.
	Consider the top-most staircase grid $B_n$ described in the proof of
	Lemma~\ref{lem:skew-ind1232143} (see the grid on the left in Figure~\ref{fig:smooth-block}).
	We separate this grid from the boundary grid of $\pi$. The top-most point
	of $\pi$ makes up for the missing left-to-right minimum that should be attached to the
	bottom-most box of $B_n$, see Figure~\ref{fig:smooth-block} (right).
	The next piece of the boundary grid is a reflected $EB_m$ for some $m \geq 0$, and again
	the total number of points attached to the grid is what it should be, even though
	they do not technically appear in the correct place. We keep going and next have another
	extended boundary grid, and then this alternates between extended boundary grids and their
	reflections. We proceed in this manner until the boundary grid of $\pi$ is exhausted. Note
	that there will be one extra boundary point left over.

	Since the core of a reflected grid is isomorphic to the core of the unreflected
	grid we get that the enumeration of independent sets is the same in both. Thus
	the enumeration of the core on the boundary grid of $\pi$ is
	\[
		P_{ind} = (F-1) \cdot \frac{1}{1-(G-1)} \cdot x + x,
	\]
	where $(F-1)$ enumerates a non-empty staircase grid, $1/(1-(G-1))$ accounts for the
	(possibly empty) sequence of extended staircase grids (where every other grid is reflected),
	and then we multiply by $x$ to track the extra boundary point. We add
	$x$ for the empty grid, representing the permutation $1$.
\end{proof}

\begin{proposition}\label{prop:smooth}
    Let $P = P(x,y)$ be the generating function where the coefficient of
    $x^n y^k$ is the number of independent sets of size $k$ in down-cores on
    boundaries given by a permutation in $\Av_n(123,2143)$. Then
    \[
        P = \frac{1}{1-P_{ind}}.
    \]
    By setting $x = z$ and $y = z/(1-z)$ we recover the generating function
    \eqref{eqn:smoothgenfunc} for the number of $(1324,2143)$-avoiding
    permutations.
\end{proposition}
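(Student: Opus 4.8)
The plan is to read $P$ as the generating function of a \emph{sequence} of the bricks counted by $P_{ind}$, the essential point being that the assignment $\pi \mapsto D(\bg{\pi})$ carries skew-sums of permutations to disjoint unions of graphs; the univariate statement then follows from the inflation substitution. First I would invoke the unique skew-decomposition (Remark~\ref{rmk:skewclosure}): every $\pi \in \av{123,2143} \subseteq \av{123}$ is a skew-sum $\alpha_1 \ominus \cdots \ominus \alpha_m$ of skew-indecomposable components. Since both $123$ and $2143$ are themselves skew-indecomposable, any occurrence of either pattern in a skew-sum lies in a single component, so the class is closed under $\ominus$ and the components $\alpha_i$ range over exactly the objects enumerated by $P_{ind}$ — with the singleton $1$ included as a brick, which is precisely the role of the isolated $+x$ term in $P_{ind}$.

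Next I would show that $\bg{\alpha_1 \ominus \cdots \ominus \alpha_m}$ is the diagonal juxtaposition of the grids $\bg{\alpha_i}$ and that $D(\bg{\pi}) = \bigsqcup_i D(\bg{\alpha_i})$. Reading off left-to-right minima and right-to-left maxima block by block shows that $\bd{\alpha \ominus \beta}$ is the skew-sum of the two boundaries and that the set of boundary points of $\alpha \ominus \beta$ is the disjoint union of the boundary-point sets of $\alpha$ and $\beta$; in particular the exponent of $x$ (number of boundary points) is additive under $\ominus$. The step I expect to take the most care is the geometric claim that no edge of the down-core can join a box of $\bg{\alpha_p}$ to a box of $\bg{\alpha_q}$ with $p \neq q$: any rectangle connecting two boxes in distinct diagonal blocks must cross the empty corner separating them, so it is not contained in the grid and the condition ``rectangle $\subseteq B$'' of Definition~\ref{def:cores} fails. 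Hence there are no cross-block edges and $D(\bg{\pi})$ splits as the stated disjoint union.

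An independent set in a disjoint union $\bigsqcup_i D(\bg{\alpha_i})$ is just an independent set chosen in each component independently, with sizes (the exponent of $y$) adding, so the bivariate generating function of $D(\bg{\pi})$ factors as $\prod_i$ of the generating functions of the $D(\bg{\alpha_i})$. Summing over the number $m \geq 0$ of components and using the uniqueness of the skew-decomposition gives $P = \sum_{m \geq 0} P_{ind}^{\,m} = 1/(1 - P_{ind})$, where the $m = 0$ term $1$ accounts for the empty permutation.

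For the final assertion I would interpret the substitution $y = x/(1-x)$ through Equation~\eqref{eqn:av13242143}: a weighted independent set of size $k$ in a down-core builds a $(1324,2143)$-avoiding permutation by inflating each of its $k$ chosen vertices into a decreasing run of at least one point, so replacing each vertex-marker $y$ by $x + x^2 + \cdots = x/(1-x)$ while leaving the boundary-point marker $x$ intact converts $P(x,y)$ into the length generating function of $\av{1324,2143}$. It then remains only to verify that $P(x,\,x/(1-x))$ equals \eqref{eqn:smoothgenfunc}; this is a routine computation in which one substitutes the closed form of $F$ from Theorem~\ref{thm:indep_count_132}, then $G$ from Lemma~\ref{lem:G} and $P_{ind}$ from the preceding lemma, and simplifies. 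I expect the algebra here to be the only tedious part, but it presents no conceptual difficulty.
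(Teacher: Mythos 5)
Your proof is correct and takes essentially the same approach as the paper, whose entire proof is the one-sentence observation that taking skew-sums of the skew-indecomposable permutations recovers all of $\av{1324,2143}$; your write-up simply supplies the details that sentence leaves implicit (closure of the class under $\ominus$ since $123$ and $2143$ are skew-indecomposable, the splitting of the down-core into a disjoint union with no cross-block edges, the resulting geometric series, and the inflation substitution $y = x/(1-x)$). One small slip: for down-cores the vertices of a weighted independent set are inflated into \emph{increasing} runs (decreasing runs are used for up-cores and $\av{1234}$), though this has no effect on the generating-function computation.
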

\begin{proof}
Since the patterns $1324$ and $2143$ are skew-indecomposable this follows from
Remark~\ref{rmk:skewclosure}.
\end{proof}

\subsection{Updown-cores and the class $\av{1234,1324,2143}$}
\label{sub:The subclass av{1234,1324,2143}}
The permutation class $\av{1234,1324}$ is conjectured to have a non-$D$-finite
generating function by Albert et. al~\cite[Section 6.4]{resconts}. In this
section, we show that the subclass that avoids $2143$ has the rational
generating function
\begin{equation} \label{eqn:nicegenfunc}
    \frac{1 - 3z - 2z^3}{1 - 4z + 2z^2 - 2z^3 + z^4}.
\end{equation}
Similar to the previous section we use the equation
\[
    \av{1234, 1324, 2143} = \bigsqcup_{\pi \in \av{123, 2143}} \udperms{\bg{\pi}},
\]
which follows from Proposition~\ref{prop:disjoint}~\eqref{prop:disjoint3}.
Lemma~\ref{lem:skew-ind1232143} still describes the boundaries of these
permutations, and we will need to perform the same type of analysis as in the previous
section, with the difference that we need to understand the updown-cores of
boundary grids of permutations in $\av{123, 2143}$.

\begin{lemma}\label{lem:udstair}
	Let $R = R(x,y)$ be the generating function where the coefficient of $x^n y^k$
	is the number of independent sets of size $k$ in updown-cores of $B_n$.
	Then
	\[
		R = 1 + xR + \frac{xyR}{1-x}.
	\]
\end{lemma}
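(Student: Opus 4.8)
The plan is to reinterpret independent sets in the updown-core of $B_a$ as families of pairwise disjoint intervals, and then to read off the functional equation from an elementary left-to-right decomposition of such families.

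First I would record the adjacency rule for $UD(B_a)$ in a usable form. Recall that the updown-core records the mutual exclusions coming from both the $1234$- and the $1324$-pattern; concretely, two boxes are adjacent exactly when the rectangle they span is contained in the grid. For the staircase grid this rectangle lies in $B_a$ precisely when its lower-left corner (the box $(\max(i,k),\min(j,\ell))$) lies in the grid, that is, when $\max(i,k)\le\min(j,\ell)$. Associating to each box $(ij)$ the interval $[i,j]\subseteq\{1,\dots,a\}$ (legitimate since $i\le j$), two boxes are therefore adjacent in $UD(B_a)$ if and only if their intervals intersect. Consequently an independent set of size $k$ in $UD(B_a)$ is exactly a family of $k$ pairwise disjoint nonempty subintervals of $\{1,\dots,a\}$, and $R(x,y)$ is the generating function that counts, with weight $x^a y^k$, the families of $k$ pairwise disjoint subintervals of a line of $a$ points.

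Next I would decompose a configuration according to the status of the leftmost point $1$. If $a=0$ there is only the empty family, contributing $1$. If $a\ge 1$ and point $1$ is not covered by any interval, then every interval lies in $\{2,\dots,a\}$; deleting point $1$ gives, after relabelling, an arbitrary disjoint-interval family on a line of length $a-1$, so this case contributes $xR$, the factor $x$ accounting for the deleted point. If instead point $1$ is covered, it is covered by a unique interval, necessarily of the form $[1,m]$ with $m\ge 1$ (disjointness forces at most one covering interval, and its left endpoint must be $1$); the remaining intervals are then pairwise disjoint and disjoint from $[1,m]$, hence form an arbitrary disjoint-interval family on $\{m+1,\dots,a\}$, a line of length $a-m$. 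Summing over $m$, this case contributes $\sum_{m\ge 1}x^m y R=\frac{xyR}{1-x}$, where $x^m$ marks the $m$ points of $[1,m]$ and $y$ marks that interval. These three cases are mutually exclusive and exhaustive, so adding them yields $R=1+xR+\frac{xyR}{1-x}$. In box language the same split reads: either the independent set uses no box in the first row, leaving the sub-staircase on rows $2,\dots,a$, isomorphic to $UD(B_{a-1})$; or it uses the single first-row box $(1m)$ together with an independent set of the sub-staircase on rows $m+1,\dots,a$, isomorphic to $UD(B_{a-m})$.

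The only real content is the first step, translating the definition of $UD(B_a)$ into the interval-intersection condition. Everything after that is the standard first-passage decomposition for disjoint intervals, and the only points needing care are routine: that in each case the residual object is genuinely a fresh instance of the same problem (which holds because adjacency depends only on the spanned rectangle, a local condition preserved under passing to a sub-staircase), and that the weights $x$ and $y$ correctly bookkeep the line length and interval count across the split $\{1,\dots,a\}=[1,m]\sqcup\{m+1,\dots,a\}$.
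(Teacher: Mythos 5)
Your proof is correct and takes essentially the same route as the paper: the paper's own proof likewise splits on the at most one top-row box (equivalently, the unique interval containing the point $1$) used by the independent set, getting the contributions $1$, $xR$, and $\sum_{m\ge 1}x^{m}yR = \frac{xyR}{1-x}$. Your interval dictionary (box $(ij)\leftrightarrow[i,j]$, adjacency $\leftrightarrow$ intersection) merely makes explicit the adjacency facts the paper uses implicitly, and your closing ``box language'' paragraph is word-for-word the paper's decomposition.
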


\begin{proof}
	The empty grid $B_0$ has one independent set, namely the empty set which gives
	the first term $x^0 y^0 = 1$ in the equation above. Assume $n > 0$.
	An independent set in $UD(B_n)$ can contain at most one vertex
	from the top row. If it contains no vertex then the remaining vertices form
	a graph isomorphic to $UD(B_{n-1})$. This case gives the term $xR$.
	Otherwise, there is exactly one vertex in the
    top row, see Figure~\ref{fig:proofR}. If the vertex is in box $(1j)$ then it is connected
    to every vertex in
    row $1, 2, \dots, j$, and no other vertices. The induced subgraph on the vertices
    in the remaining rows is a graph isomorphic to $UD(B_{n-j})$. There are $j$
		left-to-right minima to the left of the box $(1j)$. This case
    gives the last term in the equation: $x/(1-x)$ for the minima to the left of $(1j)$,
    $y$ for the box $(1j)$, and $R$ for the subgraph below.
\end{proof}

\begin{figure}[h]
\begin{center}
\begin{tikzpicture}[scale=\picscales]
   \thepoints[(0,4)]{5}
   \labeltheboxes[(0,5)]{1/3/\permaslabel{0.2}{0.25}{1}/0/0}
  \shadetheboxes[(0,5)]{1/1,1/2,1/4,1/5,
                 2/2,2/3,2/4,2/5,
                 3/3,3/4,3/5}
   \thegrid{5}

\end{tikzpicture}
\caption{Illustration of the proof of Lemma~\ref{lem:udstair}.}
\label{fig:proofR}
\end{center}
\end{figure}

\begin{lemma}\label{lem:qind}
    Let $Q_{ind} = Q_{ind}(x,y)$ be the generating function where the
    coefficient of $x^n y^k$ is the number of independent sets of size $k$ in
    updown-cores on boundaries of skew-indecomposable permutations in
    $\Av_n(123,2143)$. Then
    \[
        Q_{ind} = x + x^2 + x^2y + Q_{up} + Q_{down},
    \]
    where $Q_{up} = Q_{up}(x,y)$, $Q_{down} = Q_{down}(x,y)$ satisfy the
    equations
    \[
        Q_{up} = xR - x - x^2 - x^2y + \frac{(xR - x)Q_{down}}{x} + \frac{x^2y R (Q_{up} + Q_{down} + x^2 + x^2y)}{1-x},
    \]
    \[
        Q_{down} = xR - x - x^2 - x^2y + \frac{(xR - x)Q_{up}}{x} + \frac{x^2y R (Q_{up} + Q_{down} + x^2 + x^2y)}{1-x},
    \]
    and $R = R(x,y)$ is given in Lemma~\ref{lem:udstair}.
\end{lemma}

\begin{proof}
	As before we will use $x$ to track the number of boundary points and $y$ to track the
	size of independent sets.
	Recall the description given in Lemma~\ref{lem:skew-ind1232143} of the boundary grid $B$ of a
	skew-indecomposable permutation $\pi$ in $\Av(123,2143)$. There are three types of grids
	we distinguish here:
	\begin{enumerate}
		\item First, when the length of $\pi$ is one or two. Then we can
	directly see that we get the terms $x + x^2(1+y)$ in the equation for $Q_{ind}$.
		\item Next we assume that $\pi$ starts with
	a descent so we can separate a staircase grid $B_b$ (with $b>1)$ from the entire grid.
	Let $Q_{up}$ to be the generating function enumerating the number of independent sets
	for this type.
		\item Finally, $\pi$ could start with a ascent. Then we may consider the leftmost part of
	the boundary grid $B$ as a reflected staircase grid $B_b'$ (with $b>1$). Let $Q_{down}$ be
	the generating function enumerating the number of independent sets for this type.
	\end{enumerate}

	We derive the functional equation satisfied by $Q_{up}$.
	If the boundary grid $B$ has only one part in its skew-sum, then it is isomorphic to
	$B_b$. This gives the terms $xR - x - x^2 - x^2y$ in the equation for $Q_{up}$.
	Let $B$ be a boundary with at least two parts in its skew-decomposition, where the
	left-most part is some $B_b$ with $b > 1$. Let $S$ be the set
    of vertices $S = \{(ib) | i = 1, \ldots, b - 1 \}$, in the last column of $B_b$.
	These vertices are in the same column, so an independent set in $UD(B)$ contains at most one
	vertex from $S$. We consider two cases.
	\begin{enumerate}
		\item[C1]\label{lem:Qind1} Assume that the independent set contains no vertex in $S$.
		If we partition the vertices in $UD(B)$ as the sets $L = \{(ij) | i \leq b - 1 \text{ and } (ij) \in UD(B) \}$
		and $R = \{(ij) | i \geq b, j \geq b  \text{ and } (ij) \in UD(B)\}$,
		then there will be no edges connecting between $L$ and $R$. Moreover, the induced subgraph of $L$ is isomorphic to $UD(B_{b-1})$ and the induced subgraph of $R$ is isomorphic to $UD(B')$ for some boundary grid $B'$ in $Q_{down}$. This gives the $\frac{(xR - x)Q_{down}}{x}$ term in the functional equation. An example of this case
		is shown on the left in Figure~\ref{fig:Qproof}.
		\item[C2]\label{lem:Qind2} Assume that the independent set contains a vertex from $S$, say $bc$, then it can contain no vertices in the set $M = \{(ij) | c \leq j \leq b \text{ and } (ij) \in UD(B) \}$. Partition the remaining vertices into the sets $L =\{(ij) | j \leq c \text{ and } (ij) \in UD(B) \}$ and $R = \{(ij) | j > b \text{ and } (ij) \in UD(B) \}$. There are no edges between $L$ and $R$, and moreover, the induced subgraph of $L$ is isomorphic to $UD(B_{c-1})$, and the induced subgraph of $R$ is isomorphic to $UD(B)$ for some boundary grid $B$ of a skew-indecomposable permutation in $\Av(123, 2143)$. This implies the $\frac{x^2y R (Q_{up} + Q_{down} + x^2 + x^2y)}{1-x}$ term in the functional equation, where the $1/(1-x)$ accounts for the points attached to the boxes corresponding to the vertices in $M$.  An example of this case
		is shown on the right in Figure~\ref{fig:Qproof}.
		\end{enumerate}
	The justification for the equation given for $Q_{down}$ is a symmetry of the arguments
	given for $Q_{up}$.
\end{proof}

\begin{figure}[ht]
\begin{center}
\begin{tikzpicture}[scale=\picscales]
    \thepoints[(0,3)]{4}
    \thepoints[(5,4)]{1}
    \thegridext{4}
    \shadetheboxes[(0,4)]{1/5, 2/5, 3/5, 4/5}
    \begin{scope}[rotate=-90, yscale=-1, shift={(0,-7)}]
        \thepoints[(1,1)]{2}
        \thepoints[(4,3)]{1}
        \thegridext{3}
    \end{scope}
    \begin{scope}[shift={(6,-5)}]
        \thepoints[(1,0)]{1}
        \thepoints[(2,2)]{1}
        \thegrid{2}
    \end{scope}

    \begin{scope}[shift={(11,0)}]
        \thepoints[(0,3)]{4}
        \shadetheboxes[(0,4)]{1/5, 2/5, 4/5, 5/5, 6/5, 7/5, 8/5, 1/4, 2/4, 3/4, 4/4, 1/3, 2/3, 3/3}
        \labeltheboxes[(0,4)]{3/5/\permaslabel{0.2}{0.25}{1}/0/0}
    \thepoints[(5,4)]{1}
    \thegridext{4}
    \begin{scope}[rotate=-90, yscale=-1, shift={(0,-7)}]
        \thepoints[(1,1)]{2}
        \thepoints[(4,3)]{1}
        \thegridext{3}
    \end{scope}
    \begin{scope}[shift={(6,-5)}]
        \thepoints[(1,0)]{1}
        \thepoints[(2,2)]{1}
        \thegrid{2}
    \end{scope}
    \end{scope}
\end{tikzpicture}
\caption{On the left is an example of case~C1 in the proof of Lemma~\ref{lem:qind},
and on the right an example of case~C2.}
\label{fig:Qproof}
\end{center}
\end{figure}

Remark~\ref{rmk:skewclosure} implies we can find the generating function for
$(1234,1324,2143)$-avoiding permutations.

\begin{proposition} \label{prop:Q}
    Let $Q = Q(x,y)$ be the generating function where the coefficient of
    $x^n y^k$ is the number of independent sets of size $k$ in updown-cores on
    boundaries from $\Av_n(123,2143)$. Then
    \[
        Q = \frac{1}{1-Q_{ind}}.
    \]
    By setting $x = z$ and $y = z$ we get the generating function
    \eqref{eqn:nicegenfunc} for the number of $(1234,1324,2143)$-avoiding
    permutations.
\end{proposition}

The generating function gives the enumeration
\[
    1, 1, 2, 6, 21, 75, 268, 958, 3425, 12245, 43778, 156514, 559565, \dots
\]
and has been added to the Online Encyclopedia of Integer
Sequences~\cite[A263790]{oeis}.

\subsection{Updown-cores and the class $\av{1234,1324,1432,3214}$}
Here we replace avoidance of $2143$ with avoidance of $1432$ and $3214$ and
show that this subclass has the rational generating function

\begin{equation} \label{eqn:notquiteasnicegenfunc}
    \frac{1 - z - z^2 - z^3}{1 - 2z - z^2 - 2z^3 - 4z^4 - 8z^5 + 15z^7 + 14z^8 + 7z^9}.
\end{equation}

If a permutation $\pi$ avoids $1324$, then it contains $1432$ ($3214$) if
and only if $\bd{\pi}$ contains $1432$ ($3214$).
Proposition~\ref{prop:disjoint}~\eqref{prop:disjoint3} therefore implies the
equation
\[
    \av{1234, 1324, 1432, 3214} = \bigsqcup_{\pi \in \av{123, 1432, 3214}} \udperms{\bg{\pi}}.
\]

As before we need a structural description of the boundary grids indexed in the disjoint union
above:

\begin{lemma}\label{lem:ud14323214}
A boundary grid of a skew-indecomposable permutation in
\[
	\av{123,1432,3214} \backslash \{1,12,2143\}
\]
is an alternating sequence of
grids of the form $\bg{132}$ and $\bg{213}$ sharing their northwestern-most box
with the southeastern-most box of the next grid.
\end{lemma}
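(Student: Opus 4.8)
The plan is to mimic the structural argument used in Lemma~\ref{lem:skew-ind1232143}, adapting it to the wider class of permitted ``joints'' allowed by avoiding $1432$ and $3214$ instead of $2143$. First I would draw a permutation $\pi \in \av{123,1432,3214}$ on its staircase grid. Since $\pi$ avoids $123$, every rectangular region of the grid is decreasing, exactly as in the proof of Lemma~\ref{lem:skew-ind1232143}. The key local constraints come from the two new patterns: I would show that avoiding $1432$ forces each off-diagonal rectangular region (one not meeting the leading diagonal) to contain at most two points, and symmetrically that avoiding $3214$ imposes the dual constraint. The point is that $1432 = 1\!\cdot\!(432)$ is a single small element below a decreasing run of three, which on the staircase grid corresponds to a left-to-right minimum sitting southwest of three points in an off-diagonal rectangle; likewise $3214 = (321)\!\cdot\!4$ is a decreasing run of three followed by a large element, corresponding via the right-to-left-maximum side. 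Together these bound the off-diagonal blocks, and the blocks that can appear are precisely the boundary grids $\bg{132}$ and $\bg{213}$, each of which is a staircase grid with one extra point making a joint one step larger than the single box permitted by $2143$.

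Next I would use skew-indecomposability. As in Lemma~\ref{lem:skew-ind1232143}, a skew-indecomposable boundary (of length at least two, and excluding the small exceptional cases $1,12,2143$ listed in the statement) must have, northeast of each left-to-right minimum, at least one point not on the leading diagonal; otherwise the grid would split as a nontrivial skew-sum. Combined with the bounded-block analysis above, this forces the grid to be built as a chain of blocks, each block being a $\bg{132}$ or a $\bg{213}$, glued so that the north-westernmost box of one block coincides with the south-easternmost box of the next. The alternation claim I would obtain by observing that two consecutive blocks of the \emph{same} type (say two $\bg{132}$'s in a row sharing a corner) would create a forbidden configuration --- concretely, a copy of $1432$ or $3214$ spanning the shared corner --- so the types must strictly alternate.

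Concretely, the steps in order are: (1) record that decreasingness in rectangles is immediate from $123$-avoidance; (2) translate $1432$-avoidance and $3214$-avoidance into the statement that each off-diagonal block has the shape $\bg{132}$ or $\bg{213}$, carefully checking the corner boxes that are shared between adjacent blocks; (3) use skew-indecomposability to force a point northeast of every left-to-right minimum, so the grid is a single connected chain with no empty ``gaps'' that would allow a skew decomposition; (4) rule out two equal-type blocks in succession to get strict alternation; and (5) treat the boundary/exceptional cases $1,12,2143$ separately, which is why they are excised in the statement.

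I expect the main obstacle to be Step~(2), pinning down \emph{exactly} which blocks are allowed and getting the shared-corner bookkeeping right. The subtlety is that the forbidden patterns $1432$ and $3214$ each use four points, and on the staircase grid some of those points may lie on the leading diagonal (as left-to-right minima) while others are interior; I have to verify that no interior configuration larger than $\bg{132}$ or $\bg{213}$ survives both avoidance conditions, and simultaneously that both $\bg{132}$ and $\bg{213}$ genuinely do survive. Because the joint box is shared between consecutive blocks, an occurrence of a forbidden pattern can straddle the joint, so the case analysis must consider points drawn from two adjacent blocks at once; this is the delicate part, and it is also exactly what drives the alternation in Step~(4). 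The remaining steps are essentially the same skew-indecomposability argument already carried out in Lemma~\ref{lem:skew-ind1232143}, so I would keep them brief and refer back to that proof.
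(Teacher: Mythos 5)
Your overall plan does follow the paper's strategy---turn $1432$- and $3214$-avoidance into bounds on how many boundary points can lie northeast of a left-to-right minimum or southwest of a right-to-left maximum, then let skew-indecomposability force the chain---but your Step (2) as stated has a genuine gap. You bound only \emph{off-diagonal} rectangular regions (at most two points), importing that restriction from Lemma~\ref{lem:skew-ind1232143}. The restriction was correct there because $2143$ needs \emph{two} left-to-right minima to serve as its ``$21$'', so only regions lying northeast of at least two lrms (i.e.\ regions missing the leading diagonal) are constrained. But $1432$ needs only \emph{one} lrm as its ``$1$'', so the bound must hold for every rectangular region northeast of a lrm, diagonal boxes included. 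The permutation $1432$ itself shows your stated constraints are too weak: its staircase grid is $B_1$, which has no off-diagonal region at all, so your $1432$-constraint is vacuous; your $3214$-dual holds (only one lrm lies southwest of each rlm); and it is skew-indecomposable with a point northeast of its lrm. Yet $1432 \notin \av{123,1432,3214}$, and its boundary grid is the staircase $B_3$ reflected in the diagonal, which is not an alternating chain of $\bg{132}$'s and $\bg{213}$'s. (The permutation $265413$ is a longer example with the same defect.) So the implication ``your constraints imply the claimed structure'' is false, and Step (2) cannot pin the blocks down to $\bg{132}$ and $\bg{213}$.

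The paper's proof never restricts to off-diagonal regions: since a $123$-avoiding permutation is its own boundary, every non-lrm point is a rlm, and any three rlms northeast of a lrm automatically form a decreasing sequence; hence $1432$-avoidance says directly that at most two rlms lie to the right of and above \emph{any} lrm, and symmetrically $3214$-avoidance says that at most two lrms lie below and to the left of any rlm. These unrestricted bounds cap every staircase piece at size two, and together with skew-indecomposability this forces the alternating structure. A secondary point: your Step (4) attributes the alternation to a forbidden pattern straddling a shared corner box, but two blocks of the same type glued corner-to-corner cannot be realized as the boundary grid of any permutation in the first place (the lrm and rlm that such a gluing would require end up in the same column), so realizability, not pattern avoidance, is what rules that case out.
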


\begin{proof}
	Let $\pi$ be a skew-indecomposable permutation in $\av{123,1432,3214} \backslash \{1,12,2143\}$.
	The number of points northeast of a left-to-right minimum in $\pi$ is at most two, and likewise
	the number of points southwest of a right-to-left maximum in $\pi$ is at most two. We leave it
	to the reader to
	show that the boundary grid of $\pi$ has the claimed structure by following a similar argument
	as in the proof of Lemma~\ref{lem:qind}. See Figure~\ref{fig:next}.
\end{proof}

\begin{figure}[ht]
\begin{center}
\begin{tikzpicture}[scale=\picscales]
    \thepoints[(0,3)]{1}
    \thepoints[(-1,1)]{1}
    \thepoints[(2,2)]{1}
    \thepoints[(1,-1)]{1}
    \thepoints[(4,0)]{1}
    \thepoints[(3,-2)]{1}
    \thegrid{2}
    \begin{scope}[rotate=-90, yscale=-1, shift={(-1,-3)}]
        \thegrid{2}
    \end{scope}
    \begin{scope}[shift={(2,-2)}]
        \thegrid{2}
    \end{scope}
    \begin{scope}[rotate=-90, yscale=-1, shift={(-3,-1)}]
        \thegrid{2}
    \end{scope}

\end{tikzpicture}
\caption{The boundary grid of the permutation $462513$.}
\label{fig:next}
\end{center}
\end{figure}

Having the structural description of the boundary grids allows us to enumerate
the updown-cores by the size of the independent set.

\begin{lemma}
    Let $S_{ind} = S_{ind}(x,y)$ be the generating function where the
    coefficient of $x^n y^k$ is the number of independent sets of size $k$ in
    updown-cores on boundaries of skew-indecomposable permutations in
    $\Av_n(123,1432,3214)$. Then
    \[
        S_{ind} = x + x^2(1 + y) + x^4(1 + 7 y + 7 y^2 ) + S_{up} + S_{down},
    \]
    where $S_{up} = S_{up}(x,y)$, $S_{down} = S_{down}(x,y)$ satisfy the
    equations
    \[
       S_{up} = x^3 (1 + 3 y + y^2 ) + x S_{down} + x y S_{down} + x^2 y (S_{up} + (1+y)x^2),
    \]
    \[
        S_{down} = x^3 (1 + 3 y + y^2 ) + x S_{up} + x y S_{up} + x^2 y (S_{down} + (1+y)x^2).
    \]
\end{lemma}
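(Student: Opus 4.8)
The plan is to follow the template of the immediately preceding lemma for $Q_{ind}$, replacing the variable-size staircase blocks $B_a$ by the two fixed blocks $\bg{132}$ and $\bg{213}$ supplied by Lemma~\ref{lem:ud14323214}. First I would dispose of the three exceptional permutations that the structural lemma sets aside. The permutation $1$ has an empty boundary grid and contributes $x$; the permutation $12$ has a single-box grid whose updown-core is one vertex, contributing $x^2(1+y)$; and $2143$ is its own boundary, whose boundary grid consists of seven boxes. A direct inspection of $UD(\bg{2143})$ shows it has no independent set of size exceeding two, with exactly seven independent sets of each of sizes one and two, giving the contribution $x^4(1+7y+7y^2)$. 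These three pieces account for the explicit polynomial terms in the claimed expression for $S_{ind}$.

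For the generic chains I would let $S_{up}$ (resp.\ $S_{down}$) be the generating function for those skew-indecomposable boundaries whose lower-right-most block in the alternating sequence is of $\bg{132}$-type (resp.\ $\bg{213}$-type). The two block types are interchanged by reflection in the line $y=x$, so by Remark~\ref{rem:symm} the equation for $S_{down}$ is obtained from that for $S_{up}$ by swapping the two functions; this is precisely why the two displayed equations are mirror images, and it suffices to derive the one for $S_{up}$. I would first note that a single $\bg{132}$ block, whose updown-core is a path on its three boxes, has independent-set generating polynomial $1+3y+y^2$; since the block has three boundary points this yields the base term $x^3(1+3y+y^2)$.

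The recursive terms come from peeling off the rightmost $\bg{132}$ block and conditioning on its top row, which is the single box shared with the block to its upper-left. Exactly as in the $Q_{up}$ derivation there are two cases. If the shared top box carries no vertex of the independent set, the rest of the block together with the chain to its left is a shorter chain ending in a $\bg{213}$-type block; tracking the admissible independent-set choices among the remaining boxes of the block produces the factor $x(1+y)$ and hence the term $x(1+y)S_{down}$. If the shared box does carry a vertex, the adjacency constraints of the $\bg{132}$ block force the continuation to present again as a $\bg{132}$-ending chain, or else as the minimal admissible start, giving the self-referential term $x^2 y\,(S_{up}+(1+y)x^2)$, where $(1+y)x^2$ is the smallest opposite-type connector. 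Assembling the two cases gives the equation for $S_{up}$, and the reflection symmetry gives $S_{down}$, whence $S_{ind}=S_{up}+S_{down}+x+x^2(1+y)+x^4(1+7y+7y^2)$.

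The main obstacle is the bookkeeping at the shared corner box: one must verify precisely how an independent set may simultaneously occupy boxes of the new $\bg{132}$ block while respecting, and being respected by, the shared box, so as to pin down the coefficients $x(1+y)$ and $x^2 y$ and, in particular, to justify that the occupied-box case reproduces $S_{up}$ rather than $S_{down}$. Finally, substituting $y=x$ specialises these weighted counts to the plain enumeration of the corresponding skew-indecomposable permutations in $\av{1234,1324,1432,3214}$, since in the updown setting each independent-set vertex inflates to exactly one inside point.
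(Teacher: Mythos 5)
Your overall skeleton follows the paper's: you invoke Lemma~\ref{lem:ud14323214}, compute the base polynomial $x^3(1+3y+y^2)$ and the exceptional contributions $x$, $x^2(1+y)$, $x^4(1+7y+7y^2)$ correctly, appeal to Remark~\ref{rem:symm} for $S_{down}$, and end with $y=x$ (your swap of which chain-type is called $S_{up}$ is immaterial by the symmetry of the system). However, the recursive step --- the actual content of the lemma --- does not work as you describe it. You condition on the \emph{shared} box, the one the rightmost block has in common with the next block, whereas the paper conditions on the other two boxes of the rightmost block, and with your conditioning the two claimed terms are simply not what the two cases contribute. Label the rightmost block's boxes $N$ (shared), $M$ (the corner of the L) and $S$ (the extreme box), and let $C'$ be the corner box of the next block. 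In the updown-core of the whole chain the relevant edges are $M$--$S$, $M$--$N$, $N$--$C'$, and also $M$--$C'$, since $M$, $N$, $C'$ are three consecutive boxes of a single row or column of the grid with all intermediate boxes present; on the other hand $S$ is adjacent to nothing but $M$. So when $N$ is unoccupied, the choices on $\{M,S\}$ do not factor off as $(1+y)$: $M$ and $S$ are adjacent (in isolation they would give $1+2y$), and $M$ additionally interacts with $C'$ across the joint, so this case is \emph{not} $x(1+y)S_{down}$. When $N$ is occupied, both $M$ and $C'$ are excluded but $S$ is set free, so this case contributes $x^2y(1+y)\bigl(S_{up}+(1+y)x^2\bigr)$, not $x^2y\bigl(S_{up}+(1+y)x^2\bigr)$. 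Your two case-values sum to the correct right-hand side only because they were read off from the statement, not derived.

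A concrete check: for the two-block chain $\bg{2413}$ the updown-core is the path $N''$--$C'$--$N$--$M$--$S$ together with the chord $C'$--$M$, and its independent-set polynomial is $1+5y+5y^2+y^3$. Conditioning on $N$ gives $1+4y+3y^2$ ($N$ empty) plus $y(1+y)^2$ ($N$ occupied), whereas your claimed terms evaluate to $(1+y)(1+3y+y^2)$ and $y(1+y)$; the totals agree, the cases do not. The paper's decomposition resolves exactly the bookkeeping you flag as ``the main obstacle'': condition on the two non-shared boxes $M$ and $S$. Choosing nothing or choosing $S$ leaves the remaining chain (which still contains $N$) completely unconstrained, giving $x(1+y)S_{down}$; choosing $M$ kills both $N$ \emph{and} $C'$, so the recursion jumps \emph{two} blocks up the chain --- this is why the self-referential term is $x^2y\bigl(S_{up}+(1+y)x^2\bigr)$, the summand $(1+y)x^2$ covering the case where only the single box beyond $C'$ remains. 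Without identifying the chord $M$--$C'$ and this two-block jump, the equation for $S_{up}$ has no justification, so the proof as proposed does not go through.
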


\begin{proof}
We define $S_{up}$ to be the generating
function for the skew-indecomposable grids whose right-most grid is of the form
$\bg{213}$ and $S_{down}$ for the skew-indecomposable grids whose right-most grid
is of the form $\bg{132}$. Therefore if either has only one part it will be
counted by $x^3 (1 + 3y + y^2)$. The derivation of the equations for $S_{up}$
and $S_{down}$ are analogous to the proof of Lemma~\ref{lem:qind} and are omitted.
We add in $x$ for the
single point, $x^2 (1+y)$ for $\bg{12}$ and $x^4 (1 + 7y + 7y^2)$ for
$\bg{2143}$.
\end{proof}

Remark~\ref{rmk:skewclosure} implies we can find the generating function for
$(1234,1324,1432,3214)$-avoiding permutations.

\begin{proposition}\label{prop:S}
    Let $S = S(x,y)$ be the generating function where the coefficient of
    $x^n y^k$ is the number of independent sets of size $k$ in updown-cores on
    boundaries from $\Av_n(123,1432,3214)$. Then
    \[
        S = \frac{1}{1-S_{ind}}.
    \]
    By setting both $x=z$ and $y=z$ we get the generating function
    \eqref{eqn:notquiteasnicegenfunc} for the number of
    $(1234,1324,1432,3214)$-avoiding permutations.
\end{proposition}

The generating function gives the enumeration
\[
    1, 1, 2, 6, 20, 62, 172, 471, 1337, 3846, 11030, 31442, 89470, 254934 \dots
\]
and has been added to the Online Encyclopedia of Integer
Sequences~\cite[A260696]{oeis}.

\section{Conclusions and future work}
\label{sec:future_work}





\subsection*{Growth rates}

As already noted the enumeration for the number of permutations that avoid $1324$ is unknown,
and not even the growth rate, i.e., the limit
\[
	\lim_{n \to \infty} \sqrt[n]{\Av_n(1324)},
\]
is known. Recently Bevan et al.~\cite{gr} established that the growth rate is in the
interval $[10.271, 13.5]$. We hope that the methods developed here can lead to some
non-trivial lower bounds for this growth rate, even though the classes studied here did
not yield good bounds, see Table~\ref{tbl:grw}.

\begin{table}[!htbp]
{\small
\begin{tabular}{llc}
Class                           & Reference                             & $\approx$ growth rt.\\
\hline
$\av{1324,2143}$              & Prop.~\ref{prop:smooth}, eqn~\eqref{eqn:smoothgenfunc}     & 4.4\\
$\av{1234,1324,2143}$         & Prop.~\ref{prop:Q}, eqn.~\eqref{eqn:nicegenfunc}           & 3.6\\
$\av{1234; 1324; 1432; 3214}$ & Prop.~\ref{prop:S}, eqn.~\eqref{eqn:notquiteasnicegenfunc} & 2.9 \\
\end{tabular}
}
\caption{The growth rates for the subclasses of $\av{1324}$ studied in this paper.}
\label{tbl:grw}
\end{table}

\subsection*{Generalized patterns}
\label{subs:Vincular patterns}
We have focused on classical pattern avoidance in this paper. It is likely that our methods
can be used to study the avoidance of generalized patterns. For example consider the sets
\begin{align*}
	\av{1\underline{23}4} &= \{ \pi \vert \pi \text{ avoids an occurrence of $1234$ with $2$ and $3$ adjacent}\} \\
	\av{1\underline{32}4} &= \{ \pi \vert \pi \text{ avoids an occurrence of $1324$ with $3$ and $2$ adjacent}\},
\end{align*}
which are defined by \emph{vincular} patterns, see Babson and Steingrimsson~\cite{Babson:2000tr}.
Our methods, give a bijection between the two sets, by first mapping to the boundary encoding,
and then back to permutations by reversing the points in every column.

\subsection*{Non-crossing and non-nesting partitions}
\label{sub:Non-crossing and non-nesting partitions}
It was noted by Galashin~\cite{galashin} that bump-diagrams, see
e.g.,~Rubey and Stump~\cite{rubeystump}, for non-crossing and non-nesting
partitions, provide an alternative framework for the independent sets in $132$-
and $123$-cores.
%
%
%
%
%
%
%
%
%
%
%
%
%

From empirical testing, it seems that the down-cores
on boundaries avoiding $123$ are pure if and only if the boundary avoids $2143$.
We record this as a conjecture:

\begin{conjecture}
    Let $\pi$ be a $123$-avoiding permutation. The independent set complex of
    the down-core $D(\bg{\pi})$ is pure if and only if $\pi$ avoids $2143$.
\end{conjecture}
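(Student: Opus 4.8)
The plan is to reformulate the statement as the assertion that $D(\bg{\pi})$ is a \emph{well-covered} graph (every maximal independent set has the same cardinality) exactly when $\pi$ avoids $2143$, and to reduce everything to the skew-indecomposable case. First I would record that the down-core respects skew-sums: if $\pi = \alpha \ominus \beta$, then the boxes of $\bg{\alpha}$ and $\bg{\beta}$ occupy disjoint rectangular regions whose connecting rectangles leave the grid, so no edge of the down-core joins the two, giving $D(\bg{\pi}) = D(\bg{\alpha}) \sqcup D(\bg{\beta})$. Iterating and using Remark~\ref{rmk:skewclosure}, $D(\bg{\pi})$ is the disjoint union of the down-cores of the skew-indecomposable components of $\pi$. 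Its independent-set complex is then the join of the components' complexes, and a join is pure if and only if every factor is pure, with the size of a maximal independent set additive across components. Since $2143$ is itself skew-indecomposable, any occurrence of it lies inside a single component, so $\pi$ avoids $2143$ if and only if every component does. Hence it suffices to prove the equivalence for skew-indecomposable $\pi \in \av{123}$.

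For the direction ``avoids $2143$ $\Rightarrow$ pure'', I would invoke Lemma~\ref{lem:skew-ind1232143}, which presents the boundary of such a $\pi$ as a chain of staircase grids (every other one reflected) glued corner-to-corner. The goal is to extend the non-crossing bijection of Theorem~\ref{thm:bijection} from a single staircase to this chain: map the vertices of $D(\bg{\pi})$ to the edges of a \emph{chain of polygons glued at single vertices}, so that core-edges correspond exactly to crossings, with edges of different polygons never crossing (matching the absence of edges between distinct blocks except through the shared corner box). Under such a bijection, independent sets correspond to non-crossing subgraphs and maximal independent sets to full triangulations of the polygon-chain; since the number of edges in a full triangulation (sides and diagonals) is additive over the polygons and fixed for each, all maximal independent sets share one size, giving purity. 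An equivalent, bijection-free route is a direct exchange argument treating each shared corner box as a cut vertex and showing that any maximal independent set restricts to a triangulation-type maximal set on each block.

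For the direction ``contains $2143$ $\Rightarrow$ not pure'', after the reduction I may assume $\pi$ is skew-indecomposable and contains $2143$. I would first treat the base configuration: compute $D(\bg{2143})$ explicitly and exhibit two maximal independent sets of different sizes. In general, a $2143$ occurrence forces, as in the proof of Lemma~\ref{lem:skew-ind1232143}, an off-diagonal rectangle carrying a decreasing pair of points, hence a local family of boxes of the same shape as the $2143$-boundary. I would then build, on the one hand, a large maximal independent set behaving like a triangulation away from this region, and on the other hand a maximal set that is ``trapped'' by the off-diagonal block and cannot be extended to the full size, producing the required gap.

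The main obstacle is the reverse direction at the glue points: verifying that the generalized non-crossing bijection behaves correctly where two staircases share a corner box, equivalently that purity is preserved under gluing two $132$-cores at a cut vertex, since one must show that triangulations of polygons glued at a vertex have constant total edge count and correspond precisely to maximal independent sets. A secondary difficulty is ensuring, in the forward direction, that the locally constructed small maximal set is genuinely maximal in the whole core; non-purity of an induced subgraph does not in general transfer to the ambient graph, so the construction must be carried out globally rather than by restriction.
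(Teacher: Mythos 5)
The first thing you could not have known: the paper does \emph{not} prove this statement. It appears in the closing ``Future work'' section as a conjecture, supported only by empirical testing, so there is no proof of the authors' to compare yours against, and your proposal must stand on its own. Judged that way, several of its ingredients are sound. Purity of the independence complex is indeed the same as well-coveredness of $D(\bg{\pi})$. The skew-sum reduction is correct: if $\pi = \alpha \ominus \beta$, the boundary grid contains no boxes in the two connecting rectangular regions, and no down-core edge can join a box of the $\alpha$-part to a box of the $\beta$-part, because such an edge requires a left-to-right minimum positioned before and valued below both boxes (and a right-to-left maximum after and above both), and no boundary point of a skew sum has this property; so $D(\bg{\pi})$ splits as a disjoint union over the components of Remark~\ref{rmk:skewclosure}, the complex is a join, purity passes through joins, and containment of the skew-indecomposable pattern $2143$ is detected componentwise. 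Your base case also checks out: $D(\bg{2143})$ has seven vertices whose only edges form a path on three of them, so together with the four isolated vertices it has maximal independent sets of sizes $5$ and $6$ and is not pure.

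The genuine gaps are the two directions themselves, and neither is close to closed in your write-up. For ``avoids $2143$ implies pure'' you reduce via Lemma~\ref{lem:skew-ind1232143} to chains of staircase blocks and propose to handle the joints by gluing well-covered pieces at a shared box ``treated as a cut vertex''; but that step, as stated, fails: gluing two well-covered graphs at a vertex is not well-covered in general (two single edges glued at a vertex form the path $P_3$, whose maximal independent sets have sizes $1$ and $2$). What rescues the approach --- and what you would actually need to prove --- is that the shared corner box is an \emph{isolated} vertex of the whole down-core: the boxes $(11)$ and $(nn)$ are already isolated in $D_n$, and no down-core edge can cross a joint, because the required lrm witness would have to lie before the joint in position yet below it in value, while lrm's that early have large values (for instance, $D(\bg{3142})$, the smallest two-block chain, has no edges at all). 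Granting this, the chain's core is a disjoint union of copies of $D_a$ with some isolated vertices identified, and purity follows from the constant size $2(a+1)-3$ of polygon triangulations; none of this appears in your proposal. For the converse, ``contains $2143$ implies not pure,'' you give no construction at all: producing, inside the down-core of an \emph{arbitrary} $123$-avoiding boundary containing $2143$, two maximal independent sets of different sizes is exactly the hard global problem, and the caveat you record yourself --- that non-purity of an induced subgraph does not lift to the ambient graph --- is the unresolved crux rather than a side remark. Closing these two gaps would not be a routine completion; it would settle a question the paper explicitly leaves open.
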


\subsection*{Acknowledgements}
We thank \'{E}mile Nadeau and the anonymous referees for thoughtful comments that helped
with clarifying the presentation of these results.

\bibliographystyle{amsalpha}
\bibliography{patt-gons}

\end{document}